 \newtheorem{theorem}{Theorem}[section]
 \newtheorem{lemma}[theorem]{Lemma}
 \newtheorem{proposition}[theorem]{Proposition}
 \theoremstyle{definition}
 \newtheorem{definition}[theorem]{Definition}
 \newtheorem{properties}[theorem]{Properties}
 \theoremstyle{remark}
 \numberwithin{equation}{section}
\begin{document}

\title{A New Product Formula Involving Bessel Functions}

\author[M.A. Boubatra]{Mohamed Amine Boubatra}
\address{%
Universit\'e de Tunis El Manar, Facult\'e des Sciences de Tunis
\\ LR11ES11 Laboratoire d'Analyse Math\'ematiques et Applications,
2092 Tunis, Tunisie}
\email{boubatra.amine@yahoo.fr}
\author[S. Negzaoui]{Selma Negzaoui}
\address{%
Universit\'e de Tunis El Manar, Facult\'e des Sciences de Tunis
\\ LR11ES11 Laboratoire d'Analyse Math\'ematiques et Applications,
2092 Tunis, Tunisie} \email{selma.negzaoui@fst.utm.tn}
\author[M. Sifi]{Mohamed Sifi}
\address{%
Universit\'e de Tunis El Manar, Facult\'e des Sciences de Tunis
\\ LR11ES11 Laboratoire d'Analyse Math\'ematiques et Applications,
2092 Tunis, Tunisie } \email{mohamed.sifi@fst.utm.tn}
\renewcommand{\thefootnote}{}
\footnote{2010 \emph{Mathematics Subject Classification}:Primary 33C10; 33C45 ; 42A38 Secondary 42A85 ; 33C05.}

\footnote{keywords : Product formula ; Bessel functions ; Gegenbauer polynomials ; generalized Hankel and Dunkl transforms ; convolution structure ; Translation operator}

\begin{abstract}
In this paper, we consider the normalized Bessel function of index $\alpha > -\frac{1}{2}$, we find an integral representation of the term $x^nj_{\alpha+n}(x)j_\alpha(y)$. This allows us to establish a product formula for the generalized Hankel function $B^{\kappa,n}_\lambda$ on $\mathbb{R}$. $B^{\kappa,n}_\lambda$ is the kernel of the integral transform $\mathcal{F}_{\kappa,n}$ arising from the Dunkl theory. Indeed we show that
$B^{\kappa,n}_\lambda(x)B^{\kappa,n}_\lambda(y)$ can be expressed as an integral in terms of $B^{\kappa,n}_\lambda(z)$ with
explicit kernel invoking Gegenbauer polynomials for all $n\in\mathbb{N}^\ast$. The obtained result generalizes the product formulas proved by M. R\"osler for Dunkl kernel when n=1 and by S. Ben Said when $n=2$. \\ As application, we define and study a translation operator and a convolution structure associated to $B^{\kappa,n}_\lambda$. They share many important properties with their analogous in the classical Fourier theory.
\end{abstract}
\maketitle
\section{Introduction}
In the theory of classical eigenfunctions of singular Sturm-Liouville equations such as the
Jacobi, Laguerre and Hermite polynomials or the Bessel, Whittaker and Jacobi functions,
integral representations of products are very deep formulas which prove to be useful in various
branches of mathematics. One can see \cite{As,Er1,Er2,Wa} for a comprehensive list of such identities.
Such product formulas are used, for instance, to introduce a translation operator and a convolution structure.

The purpose of this paper is to build a translation operator and a convolution structure for the one-dimensional generalized Fourier transform
${\mathcal{F}}_{\kappa,n}$, with $n\in \mathbb{N}^\star$ and $\kappa >\displaystyle \frac{n-1}{2n}$, arising from the Dunkl's theory \cite{BKO1,Du}. Indeed, it is well known that the Euclidean Fourier transform $\mathcal{F}$ is defined by
$$\mathcal{F}f(\xi) =\frac{1}{(2\pi)^{d/2}}\int_{\mathbb{R}^d}
f(x)\,K(x,\xi) dx,$$
where the kernel $K(x, \xi)$ is the unique solution of the partial differential equation system 
$$\left\{\begin{array}{ll}
 \partial x_j\,K(x, \xi) =
-i\xi_jK(x, \xi) &\;\; for\; j = 1, \ldots ,d\\                                                                                                        
 K(0, \xi) = 1 &\;\;for\; \xi\in\mathbb{R}^d.                                                                                                       \end{array}\right.$$
An other description was discovered by R. Howe \cite{How}. It states that $\mathcal{F}$ can also be defined by
\begin{equation}\label{def2}
\mathcal{F} = exp\left(\frac{i\pi d}{4}\right)\,exp\left(\frac{i\pi}{4}(\Delta-\|x\|^2)\right),
\end{equation}
where $\Delta$ is the Laplacian operator on $\mathbb{R}^d$. S. Ben Said, T. Kobayashi and B.Orsted \cite{BKO1}
gave an extension of (\ref{def2}) by replacing the Euclidean Laplace operator $\Delta$ by the sum of squares
$\Delta_\kappa$ of Dunkl operators associated with a given finite reflection group in $\mathbb{R}^d$. They defined a generalized Fourier transform $\mathcal{F}_{\kappa,a}$, where $a$ is a positive real parameter  coming from the interpolation of the minimal
unitary representations of two different reductive Lie groups and $\kappa$ is a real parameter coming from Dunkl's theory (see \cite{Du}). Many other researchs deal with this operator. One can cite for example \cite{BKO,HD,GIT}.

In this paper, we consider the case $a=\frac{2}{n}$, where
 $n\in \mathbb{N}^\star$ and $\kappa >\displaystyle \frac{n-1}{2n}$.
\\ Lets define the measure
\begin{equation}\label{mu}
d\mu_{\kappa,n}(x)= \left(M_{\kappa,n}\right)^{-1} |x|^{2\kappa+\frac{2}{n}-2}dx,\end{equation}
where
\begin{equation}\label{constant}
M_{\kappa,n}=2\left(\frac{2}{n}\right)^{\kappa n-\frac{n}{2}}\Gamma\left(\kappa n+ 1 -\frac{n}{2}\right).
\end{equation}
We denote $\mathcal{F}_{\kappa,n}=\mathcal{F}_{\kappa,\frac{2}{n}}$, the generalized Hankel transform, defined for $f\in L^1(\mathbb{R},d\mu_{\kappa,n})$, by
\begin{equation}\label{Fka}
 \mathcal{F}_{\kappa,n}f(\lambda)=\displaystyle\int_{\mathbb{R}}f(x)B_\lambda^{\kappa,n}(x)d\mu_{\kappa,n}(x),\quad\lambda \in \mathbb{R},
\end{equation}
where
  $B_\lambda^{\kappa,n}$ is the generalized Hankel function, defined by
	\begin{equation}\label{1.1} B_\lambda^{{\kappa,n}}(x)=j_{\kappa n-\frac{n}{2}}\left(n|\lambda x|^\frac{1}{n}\right)+(-i)^n\left(\frac{n}{2}\right)^n
\frac{\Gamma(\kappa n-\frac{n}{2}+1)}{\Gamma(\kappa n+\frac{n}{2}+1)}\lambda xj_{\kappa n+\frac{n}{2}}\left(n|\lambda x|^\frac{1}{n}\right),\end{equation}
where $j_\alpha$ is the normalized Bessel function of first kind and order $\alpha$, given by
	\begin{equation}\label{2.5}
	j_\alpha(z)=\Gamma(\alpha+1)\left(\frac{z}{2}\right)^{-\alpha}J_\alpha(z).
	\end{equation}
Note that $J_\alpha$ is the Bessel function of first kind and index $\alpha>-\frac{1}{2}$.

Our aim in this paper is to establish the following product formula
\begin{equation}\label{kernel}
B^{\kappa,n}_\lambda(x)\,B^{\kappa,n}_\lambda(y) = \int_\mathbb{R} B^{\kappa,n}_\lambda(z) \,d\nu_{x,y}^{\kappa,n}(z),\quad \lambda,\:x,\:y\in
 \mathbb{R}.
\end{equation}
Here the measure $d\nu_{x,y}^{\kappa,n}$ is finded explicitly by relation (\ref{main}). It is real valued measure, compactly supported on $\mathbb{R}$. It may not be positive and it is uniformly bounded on $x, y \in \mathbb{R}$.
\\ Formula (\ref{kernel}) recover the cases $n=1$ for Dunkl kernel on $\mathbb{R}$ in \cite{Ro} and $n=2$ for a modified Hankel kernel in \cite{Sa}.

Analougous results are obtained by Flensted-Jensen and Koornwinder \cite{FK} for Jacobi functions, and of Ben Salem
and Ould Ahmed Salem \cite{BO} for Jacobi-Dunkl functions, and Anker, Ayadi and the third author \cite{AAS} for Opdam's hypergeometric functions.

The key tool in the proof of (\ref{kernel}) is the relation (\ref{nqq}) of Theorem \ref{th0} which gives an integral representation  of the product $u^nj_{\alpha+n}(u)j_{\alpha}(v)$, for  $u,v\in [0,+\infty[$,
$n\in \mathbb{N}^\star$ and $\alpha>-1/2$, invoking Gegenbauer polynomials. This last formula seems to be new. 

The product formula (\ref{kernel}) allows us to define and
study the translation operators
$$\tau_x^{\kappa,n}f(y)=\int_{\mathbb{R}}f(z)d\nu_{x,y}^{\kappa,n}(z).$$ In particular, we find that $\tau_x^{\kappa,n}$ is a bounded operator on $L^p(\mathbb{R},d\mu_{\kappa,n})$.
Next, we introduce the convolution product for suitable functions $f,g$, by
$$f\star_{\kappa,n}g(x)=\int_{\mathbb{R}}f(y)\tau_x^{\kappa,n}g(y)d\mu_{\kappa,n}(z).$$
We show in particular that $f\star_{\kappa,n}g=g\star_{\kappa,n}f$ and $\mathcal{F}_{\kappa,n}(f\star_{\kappa,n}g)=
\mathcal{F}_{\kappa,n}(f)\mathcal{F}_{\kappa,n}(g)$.\\

We now briefly summarize the content of the remaining sections of the paper. In section 2 we recall some important properties of Bessel functions and Gegenbauer polynomials which will be used later. Section 3 is devoted to establish an integral form of the product $u^nj_{\alpha+n}(u)j_{\alpha}(v)$.  Section 4
deals with our main result: there we prove the product formula (\ref{kernel}).Furthermore, we close this section by giving some properties of the measure $d\nu_{x,y}^{\kappa,n}$.  In Section 5, we make more detailed study of the translation operator $\tau_x^{\kappa,n}$ and the
associated convolution product $\star_{\kappa,n}$.
\section{Preliminaries}
In this section we recall some properties of the Gegenbauer polynomials  and of the Bessel functions. See (\cite{AnAsRo,As,Wa}) for more details.

For $\alpha>0$ and $m \in\mathbb{N}$, the Gegenbauer polynomials $C_m^\alpha$, are defined by
$$C_m^\alpha(t)= \frac{\Gamma(m+2\alpha)}{\Gamma(2\alpha)\Gamma(m+1)}\;_2F_1\left[\begin{array}{c}-m\;\;m+2\alpha\\\alpha+\frac{1}{2}\end{array};
\,\frac{1-t}{2}\right], $$ where $_2F_1$ is the Gauss hypergeometric function.
Explicitly, it takes the form
\begin{equation}\label{Gegenbauer}
C_m^\alpha(t)=\frac{1}{\Gamma(\alpha)}\sum_{k=0}^{[m/2]}(-1)^k\frac{\Gamma(m-k+\alpha)}{k!(m-2k)!}(2t)^{m-2k}.
\end{equation}
Furthermore, we have
\begin{equation}\label{boudednessC}
|C^{\alpha}_{m}(\cos\phi)|\leq C^{\alpha}_{m}(1).
\end{equation}
Gegenbauer polynomials could be defined by the recurrence formula:
\begin{eqnarray}\nonumber
C_0^\alpha(t)&=&1,\\ \nonumber
C_1^\alpha(t)&=&2\alpha t,\\
m C_m^\alpha(t)&=& 2(m+\alpha-1)t C_{m-1}^\alpha(t)-(m+2\alpha-2)C_{m-2}^\alpha(t),\quad m\geq 2.\label{Gegnrec}
\end{eqnarray}
In particular, we get
$$C_2^\alpha(t)= 2\alpha(\alpha+1)t^2-\alpha ,$$
\begin{equation}\label{C3}C_3^\alpha(t)= \frac{2^2}{3}\alpha(\alpha+1)(\alpha+2)t^3-2\alpha(\alpha+1)t,
\end{equation}
and \begin{equation}\label{C4}C_4^\alpha(t)= \frac{4}{6}\alpha(\alpha+1)(\alpha+2)(\alpha+3)t^4-12\alpha(\alpha+1)(\alpha+2)t^2+3\alpha(\alpha+1).
\end{equation}
Gegenbauer polynomials can also be defined by the generating function
$$\label{generatic}
  (1-2tr+r^2)^{-2\alpha}=\sum_{m=0}^\infty C_m^\alpha(t)r^m,
$$
from which it follows that
\begin{equation}\label{gegen}
  C_m^\alpha(1)=\displaystyle \frac{(2\alpha)_{m}}{m!};\quad C_m^\alpha(-1)=(-1)^m\displaystyle \frac{(2\alpha)_{m}}{m!},
\end{equation}
where $(\alpha)_{m}=\displaystyle\frac{\Gamma(\alpha+m)}{\Gamma(\alpha)}$ is the Pochhammer symbol.

Gegenbauer polynomials form an orthogonal basis in $L^2((-1,1),(1-t^2)^{\alpha-1/2}dt)$ and we have
\begin{equation}\label{2.4}
\int_{-1}^1 C_m^\alpha(t) C_n^\alpha(t)(1-t^2)^{\alpha-1/2}\, dt=\frac{\pi\Gamma(2\alpha+m)}{2^{2\alpha-1}\Gamma(m+1)(m+\alpha)\Gamma(\alpha)^2} \delta_{nm}.
\end{equation}
The Bessel function $J_\alpha$ of first kind  and index $\alpha>-1/2$, is given by
$$J_\alpha(x)=\sum_{n=0}^{\infty}\frac{(-1)^n}{n!\,\Gamma(\alpha+n+1)}\left(\frac{x}{2}\right)^{\alpha+2n}.$$
There is a relationship between Gegenbauer polynomials and the Bessel functions which is known as Gegenbauer's addition theorem and it states, for $\alpha=0$,
\begin{equation}\label{2.1}
    J_0(\sqrt{u^2+v^2-2uv\cos \phi})=J_0(u)J_0(v)+2\sum_{m=1}^\infty J_m(u)J_m(v)\cos m\phi, \quad 0 \leq \phi \leq \pi,
\end{equation}
and for general $\alpha$,
\begin{equation}\label{2.2}
\frac{J_\alpha(\sqrt{u^2+v^2-2uv\cos(\phi)})}{\{\sqrt{u^2+v^2-2uv\cos(\phi)}\}^\alpha}= 2^\alpha\Gamma(\alpha)\sum_{m=0}^\infty(\alpha+m)
\frac{J_{\alpha+m}(u)}{u^\alpha}\frac{J_{\alpha+m}(v)}{v^\alpha} C_m^\alpha(\cos \phi).
\end{equation}
A consequence of (\ref{2.4}) and Gegenbauer's addition formulas (\ref{2.1}) and (\ref{2.2}) is the Sonine product formula given, for $\alpha>-\frac{1}{2}$, by
$$\label{2.6}
j_\alpha(u) j_\alpha(v)=C_{\alpha}\int_0^\pi j_\alpha \{\sqrt{u^2+v^2-2uv\cos \phi}\}(\sin\phi)^{2\alpha}d\phi,
$$
where $C_{\alpha}=\displaystyle \frac{\Gamma(\alpha+1)}{\sqrt{\pi}\Gamma(\alpha+1/2)}$. \\
Making the substitution
$$ w=\sqrt{u^2+v^2-2uv\cos \phi}, \quad 0\leq \phi\leq\pi,$$ we get the Bessel product formula
\begin{equation}\label{2.7}
j_\alpha(\lambda u) j_\alpha(\lambda v)=\int_0^\infty j_\alpha(\lambda w) K_B^\alpha(u,v,w)\, w^{2\alpha+1} dw,
\end{equation}
where \begin{equation}\label{2.8}
K_B^\alpha(u,v,w)=2^{-2\alpha+1}C_{\alpha}\frac{\{[(u+v)^2-w^2][w^2-(u-v)^2]\}^{\alpha-\frac{1}{2}}}{(uvw)^{2\alpha}}
\end{equation}
for $|u-v| \leq w\leq u +v$ and $K^\alpha_B(u, v, w) =0$ elsewhere.
\\ $K^\alpha_B$ is homogeneous of degree $-2\alpha-2,$ i.e.
\begin{equation}\label{2.9}
K^{\alpha}_{B}(\lambda u,\lambda v,\lambda w)=\lambda^{-2\alpha-2}K^{\alpha}_{B}(u,v,w), \ \ \lambda>0.
\end{equation}
Recall that \begin{equation}\label{Kalpha}
\int_0^\infty K_B^\alpha(u,v,w)\, w^{2\alpha+1} dw=1.
\end{equation}

We close this section by recalling the Sonine's integral formula
\begin{equation}\label{2.16}
  j_\alpha(x)=\frac{2\Gamma(\alpha+1)}{\Gamma(\beta+1)\Gamma(\alpha-\beta+1)}\int_0^1j_\beta(xt)(1-t^2)^{\alpha-\beta-1}t^{2\beta+1}dt,\quad \alpha>\beta>-\frac{1}{2},
\end{equation}
the derivative formula
\begin{equation}\label{Dj}
 \frac{d}{dx}j_\alpha(x)=\frac{-1}{2(\alpha+1)}xj_{\alpha+1}(x)
\end{equation}
and the three-term recurrence relation
\begin{equation}\label{rec}
j_{\alpha+n-1}(u)=j_{\alpha+n-2}(u)+\frac{u^2}{4(\alpha+n-1)(\alpha+n)}j_{\alpha+n}(u),\quad n\in \mathbb{N}^\star.
\end{equation}
\section{An inductive product formula for modified Bessel functions}
Our main result in this section concerns an integral form of the product $u^nj_{\alpha+n}(u)j_{\alpha}(v)$,
 $u,v\in ]0,+\infty[$ and $n\in\mathbb{N}$.

We shall use the following abbreviation:
$$\label{3.1}
 \delta(u,v,\phi)=\sqrt{u^2+v^2-2uv \cos\phi},\quad u,v>0;\quad \phi \in ]0,\pi[.
$$
 Recall that (see \cite[Subsection 2.2]{Sa})
\begin{eqnarray}\nonumber
  u j_{\alpha+1}(u)j_{\alpha}(v)&=&C_\alpha\,\int_{0}^{\pi}
j_{\alpha+1}\left( \delta(u,v,\phi)\right)\,(u-v\cos\phi) (\sin\phi)^{2\alpha}d\phi\\ \nonumber
&=&\frac{C_\alpha}{2\alpha}\int_{0}^{\pi}j_{\alpha+1}\left( \delta(u,v,\phi)\right)\left( \delta(u,v,\phi)\right)
  C_1^\alpha\left(\frac{u-v\cos\phi}{\delta(u,v,\phi)}\right)(\sin\phi)^{2\alpha}d\phi,\\ \label{n=1}
\end{eqnarray}
and
$\displaystyle u^2 j_{\alpha+2}(u)j_{\alpha}(v)$
\begin{eqnarray}\nonumber
 & =&\frac{C_\alpha}{2\alpha+1}\int_{0}^{\pi}j_{\alpha+2}\left( \delta(u,v,\phi)\right)
 \, \left\{2(\alpha+1)\left(u-v\cos\phi\right)^2
   -\left(\delta(u,v,\phi)\right)^2\right\}(\sin\phi)^{2\alpha}d\phi\\
   &=&\frac{2C_\alpha}{(2\alpha)_2}\int_{0}^{\pi}j_{\alpha+2}\left( \delta(u,v,\phi)\right)\left( \delta(u,v,\phi)\right)^2
  C_2^\alpha\left(\frac{u-v\cos\phi}{\delta(u,v,\phi)}\right)(\sin\phi)^{2\alpha}d\phi.\label{n=2}
\end{eqnarray}

 To perform the recurrence product formula, we start by considering the case $n=3$. \\

Differentiating both sides of (\ref{n=2}) with respect to $u$ and applying the derivative formula (\ref{Dj}) and the
three-term recurrence relation (\ref{rec}), yield to\\

$\displaystyle 2u j_{\alpha+1}(u)j_{\alpha}(v)- \frac{(\alpha+1)u^3}{2(\alpha+2)(\alpha+3)}j_{\alpha+3}(u)j_{\alpha}(v)$
\begin{eqnarray*}&=& 2C_\alpha\,
\int_{0}^{\pi}(u-v\cos\phi)j_{\alpha+1}\left( \delta(u,v,\phi)\right) (\sin\phi)^{2\alpha}d\phi+ \frac{C_\alpha}{2\alpha+1}\int_{0}^{\pi}
j_{\alpha+3}\left( \delta(u,v,\phi)\right)(\sin\phi)^{2\alpha}\\
  &\times& \left\{\frac{3(\alpha+1)}{2(\alpha+2)(\alpha+3)}(u-v\cos\phi)\left(\delta(u,v,\phi)\right)^2- \frac{\alpha+1}{\alpha+3}
    \left(u-v\cos\phi\right)^3\right\}d\phi.
\end{eqnarray*}
Invoking (\ref{n=1}) and (\ref{C3}), we get
\begin{eqnarray}\nonumber
  \displaystyle u^3 j_{\alpha+3}(u)j_{\alpha}(v)& =&\frac{C_\alpha}{2\alpha+1}\int_{0}^{\pi}(\sin\phi)^{2\alpha}
  j_{\alpha+3}\left( \delta(u,v,\phi)\right)\\ &\times& \nonumber
  \left\{2(\alpha+2)\left(u-v\cos\phi\right)^3 -3(u-v\cos\phi)\left(\delta(u,v,\phi)\right)^2\right\}d\phi.\\ \nonumber
  &=&\frac{6C_\alpha}{(2\alpha)_3}\int_{0}^{\pi}j_{\alpha+3}\left( \delta(u,v,\phi)\right)\left( \delta(u,v,\phi)\right)^3
  C_3^\alpha\left(\frac{u-v\cos\phi}{\delta(u,v,\phi)}\right)(\sin\phi)^{2\alpha}d\phi.\\ \label{n=3}
\end{eqnarray}

Secondly, by differentiating both sides of (\ref{n=3}) it follows from the three-term recurrence relation\\

$\displaystyle 3u^2 j_{\alpha+2}(u)j_{\alpha}(v)- \frac{(2\alpha+3)u^4}{4(\alpha+3)(\alpha+4)}j_{\alpha+4}(u)j_{\alpha}(v)$
\begin{eqnarray*}&=& \frac{3C_\alpha}{2\alpha+1}\,
 \int_{0}^{\pi}(\sin\phi)^{2\alpha}j_{\alpha+2}\left( \delta(u,v,\phi)\right)\left\{2(\alpha+1)(u-v\cos\phi)^2-\left(\delta(u,v,\phi)\right)^2\right\} d\phi\\
&-& \frac{C_\alpha}{2\alpha+1}\int_{0}^{\pi}(\sin\phi)^{2\alpha}j_{\alpha+4}\left( \delta(u,v,\phi)\right)\left\{\frac{\alpha+2}{\alpha+4}(u-v\cos\phi)^4\right.\\
  &-& \left. 3\frac{\alpha+2}{(\alpha+4)(\alpha+4)}
    \left(u-v\cos\phi\right)^2\left(\delta(u,v,\phi)\right)^2+
  \frac{3}{4(\alpha+3)(\alpha+4)}\left(\delta(u,v,\phi)\right)^4\right\}d\phi.
\end{eqnarray*}
In similar way as above, having in mind (\ref{n=2}) and (\ref{C4}),  we obtain
\begin{eqnarray}\nonumber
  u^4 j_{\alpha+4}(u)j_{\alpha}(v)&=&\frac{C_\alpha}{(2\alpha+1)(2\alpha+3)}\int_{0}^{\pi}
  j_{\alpha+4}\left( \delta(u,v,\phi\right)\,\left\{4(\alpha+2)(\alpha+3)\left(u-v\cos\phi\right)^4\right.\\ \nonumber
  &-& \left. 12(\alpha+2)(u-v\cos\phi)^2\left(\delta(u,v,\phi)\right)^2
 + 3\left(\delta(u,v,\phi)\right)^4\right\}(\sin\phi)^{2\alpha}d\phi.\\ \nonumber
  &=&\frac{24C_\alpha}{(2\alpha)_4}\int_{0}^{\pi}j_{\alpha+4}\left( \delta(u,v,\phi)\right)\left( \delta(u,v,\phi)\right)^4
 C_4^\alpha\left(\frac{u-v\cos\phi}{\delta(u,v,\phi)}\right)(\sin\phi)^{2\alpha}d\phi.\\ \label{n=4}
\end{eqnarray}

Taking into account (\ref{n=1}), (\ref{n=2}), (\ref{n=3}) combined with (\ref{n=4}), we can formulate the main result of this section.
\begin{theorem}\label{th0} Let $u,v\in [0,+\infty[$. Then for all $n\in \mathbb{N}$, we have\\

$\displaystyle u^{n} j_{\alpha+n}(u)j_{\alpha}(v)=C_{\alpha}\displaystyle \frac{n!}{(2\alpha)_n}\displaystyle\int_{0}^{\pi}j_{\alpha+n}\left( \sqrt{u^2+v^2-2uv \cos\phi}\right)$
\begin{equation}\label{recurr} \times\left\{
\left( u^2+v^2-2uv \cos\phi\right)^{\frac{n}{2}}
C_n^\alpha\left(\frac{u-v\cos\phi}{\sqrt{u^2+v^2-2uv \cos\phi}}\right)(\sin\phi)^{2\alpha}d\phi\right\}.
\end{equation}
 Or equivalently
\begin{equation}
u^{n} j_{\alpha+n}(u)j_{\alpha}(v)=\displaystyle \frac{n!}{(2\alpha)_n}\displaystyle\int_{0}^{+\infty}w^nj_{\alpha+n}(w) K_B^\alpha(u,v,w)
C^{\alpha}_{n}\left(\frac{u^2+w^2-u^2}{2uw}\right)w^{2\alpha+1}dw.\label{nqq}
\end{equation}
\end{theorem}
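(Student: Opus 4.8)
The plan is to prove (\ref{recurr}) by strong induction on $n$, systematising the passages $n=2\to 3$ and $n=3\to 4$ carried out above; the cases $n=1$ and $n=2$ are precisely (\ref{n=1}) and (\ref{n=2}) and serve as the base (and $n=0$ is the Sonine formula). For the inductive step I assume (\ref{recurr}) at the indices $n-2$ and $n-1$ and differentiate the identity at $n-1$ with respect to $u$. Throughout I abbreviate $\delta=\delta(u,v,\phi)$, $\xi=u-v\cos\phi$, $t=\xi/\delta$, and use the elementary relations $\partial_u\delta=\xi/\delta$ and $\partial_u t=(\delta^2-\xi^2)/\delta^3=(v\sin\phi)^2/\delta^3$.

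On the left-hand side, applying the derivative formula (\ref{Dj}) to $\partial_u\big[u^{\,n-1}j_{\alpha+n-1}(u)\big]$ and then eliminating $j_{\alpha+n-1}$ through the recurrence (\ref{rec}) gives
\[
(n-1)\,u^{\,n-2}j_{\alpha+n-2}(u)j_\alpha(v)-\frac{(2\alpha+n-1)\,u^{\,n}}{4(\alpha+n-1)(\alpha+n)}\,j_{\alpha+n}(u)j_\alpha(v),
\]
which is the general form already apparent in the displayed left-hand sides at $n=3$ and $n=4$.

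On the right-hand side I differentiate under the integral sign. By (\ref{Dj}) the derivative of $j_{\alpha+n-1}(\delta)$ contributes a factor $-\tfrac{\xi}{2(\alpha+n)}j_{\alpha+n}(\delta)$, while the derivative of the Gegenbauer weight is the crux:
\[
\partial_u\Big[\delta^{\,n-1}C_{n-1}^\alpha(t)\Big]=\delta^{\,n-2}\Big[(n-1)\,t\,C_{n-1}^\alpha(t)+(1-t^2)\,\tfrac{d}{dt}C_{n-1}^\alpha(t)\Big]=(2\alpha+n-2)\,\delta^{\,n-2}C_{n-2}^\alpha(t),
\]
the last step being a standard contiguous relation for Gegenbauer polynomials, readily checked from the explicit expansion (\ref{Gegenbauer}). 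Substituting this, splitting the accompanying $j_{\alpha+n-1}(\delta)$ via (\ref{rec}), and invoking the inductive hypothesis at index $n-2$, the $j_{\alpha+n-2}$ terms produced on the right reproduce exactly the term $(n-1)u^{\,n-2}j_{\alpha+n-2}(u)j_\alpha(v)$ of the left-hand side, so they cancel, leaving an identity between the $j_{\alpha+n}$ terms. Its integrand then carries only $j_{\alpha+n}(\delta)$ times $\delta^{\,n}\big[-t\,C_{n-1}^\alpha(t)+\tfrac{2\alpha+n-2}{2(\alpha+n-1)}C_{n-2}^\alpha(t)\big]$, and the recurrence (\ref{Gegnrec}) collapses this bracket to $-\tfrac{n}{2(\alpha+n-1)}C_n^\alpha(t)$. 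Comparing the overall constants with the help of $(2\alpha)_n=(2\alpha+n-1)(2\alpha)_{n-1}$ produces the factor $C_\alpha\,n!/(2\alpha)_n$ in front, which is exactly (\ref{recurr}) at $n$ and closes the induction.

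Finally, (\ref{nqq}) is obtained from (\ref{recurr}) by the substitution $w=\delta(u,v,\phi)$ already used to pass from the Sonine formula to (\ref{2.7})--(\ref{2.8}): it turns $C_\alpha(\sin\phi)^{2\alpha}d\phi$ into $K_B^\alpha(u,v,w)\,w^{2\alpha+1}dw$, replaces $\delta^{\,n}$ by $w^{\,n}$, and rewrites the Gegenbauer argument as $\frac{u-v\cos\phi}{\delta}=\frac{u^2-v^2+w^2}{2uw}$. I expect the genuine obstacle to be the crux derivative identity for $\delta^{\,n-1}C_{n-1}^\alpha(t)$: the factors $\delta$, $\xi=t\delta$ and $(v\sin\phi)^2=\delta^2(1-t^2)$ must be organised so that $\partial_u$ of the weight lands back on a single Gegenbauer polynomial of the correct degree, after which (\ref{rec}), (\ref{Dj}) and (\ref{Gegnrec}) do the bookkeeping automatically.
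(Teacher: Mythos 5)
Your proposal is correct and follows essentially the same route as the paper: a two-step induction in which one differentiates the identity at the previous index, applies (\ref{Dj}) and the three-term recurrence (\ref{rec}), cancels the $j_{\alpha+n-2}$ contribution against the lower induction hypothesis, and collapses the surviving bracket with the Gegenbauer recurrence (\ref{Gegnrec}). Your crux identity $\partial_u\left[\delta^{n-1}C_{n-1}^\alpha(t)\right]=(2\alpha+n-2)\,\delta^{n-2}C_{n-2}^\alpha(t)$ is precisely the paper's key Lemma ($\psi_n'=n\,\psi_{n-1}$ in the notation (\ref{3.7})), which the paper verifies by termwise differentiation of the explicit expansion (\ref{Gegenbauer}) — the same computation your appeal to the standard contiguous relation defers to.
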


A key tool in the proof of Theorem \ref{th0} is the following lemma.\\
We shall use the following abbreviation. For $n\in \mathbb{N}^\star$, we denote by $\psi_n$, the function defined by
\begin{equation}\label{3.7}
 \psi_n(u,v,\phi)=\displaystyle \frac{n!}{(2\alpha)_n}\left( u^2+v^2-2uv \cos\phi\right)^{\frac{n}{2}}
C_n^\alpha\left(\frac{u-v\cos\phi}{\sqrt{u^2+v^2-2uv \cos\phi}}\right).
\end{equation}
\begin{lemma}
  For $n\in \mathbb{N}^\star$, the derivative of the function $\psi_n$ with respect to the variable $u$ satisfies
\begin{equation}\label{key}
\psi'_n(u,v,\phi)=n
\psi_{n-1}(u,v,\phi).
\end{equation}
\begin{proof}
  It follows from (\ref{Gegenbauer}) that
$$\psi_n(u,v,\phi)=\frac{n!}{\Gamma(\alpha)(2\alpha)_n}
\sum_{k=0}^{[n/2]}(-1)^k\frac{2^{n-2k}\Gamma(n-k+\alpha)}{k!(n-2k)!}\left(u-v\cos\phi\right)^{n-2k}
\left(u^2+v^2-2uv \cos\phi\right)^{k}. $$
\\ {\bf Case 1: $n$ is an even integer.} Clearly\\

$\displaystyle \psi'_n(u,v,\phi)$
\begin{eqnarray*} &=&
\frac{n!}{\Gamma(\alpha)(2\alpha)_n}\left\{\sum_{k=0}^{\frac{n}{2}-1}(-1)^k
\frac{2^{n-2k}\Gamma(n-k+\alpha)}{k!(n-2k-1)!}\left(u-v\cos\phi\right)^{n-2k-1} \left(u^2+v^2-2uv \cos\phi\right)^{k}\right.\\
&+&\left.\sum_{k=1}^{\frac{n}{2}}(-1)^k\frac{2^{n-2k+1}\Gamma(n-k+\alpha)}{(k-1)!(n-2k)!}\left(u-v\cos\phi\right)^{n-2k+1}
 \left(u^2+v^2-2uv \cos\phi\right)^{k-1}\right\}.
\end{eqnarray*}
Therefore since $n$ is an even integer which gives $\displaystyle \frac{n}{2}=[\frac{n-1}{2}]+1$, and so\\

$\displaystyle \psi'_n(u,v,\phi)$
\begin{eqnarray*} &=&\displaystyle \frac{n!}{\Gamma(\alpha)(2\alpha)_{n-1}}
\sum_{k=0}^{[\frac{n-1}{2}]}(-1)^k\frac{2^{n-2k}\Gamma(n-k+\alpha)}{k!(n-2k-1)!}
\left(u-v\cos\phi\right)^{n-1-2k} \left(u^2+v^2-2uv \cos\phi\right)^{k}\\
&=& n\psi_{n-1}(u,v,\phi).
\end{eqnarray*}
{\bf Case 2: $n$ is an odd integer.} Observe that\\

$\displaystyle \psi'_n(u,v,\phi)$
\begin{eqnarray*} &=&\frac{n!}{\Gamma(\alpha)(2\alpha)_n}\left\{
\sum_{k=0}^{[\frac{n}{2}]}(-1)^k\frac{2^{n-2k}\Gamma(n-k+\alpha)}{k!(n-2k-1)!}\left(u-v\cos\phi\right)^{n-2k-1}
\left(u^2+v^2-2uv \cos\phi\right)^{k}\right.\\
& &\left. +\sum_{k=1}^{[n/2]}(-1)^k\frac{2^{n-2k+1}\Gamma(n-k+\alpha)}{(k-1)!(n-2k)!}\left(u-v\cos\phi\right)^{n-2k+1}
  \left(u^2+v^2-2uv \cos\phi\right)^{k-1}\right\}.
\end{eqnarray*}
Since $n$ is an odd integer, then $\displaystyle [\frac{n}{2}]=[\frac{n-1}{2}]$, which implies\\

$\displaystyle \psi'_n(u,v,\phi)$
\begin{eqnarray*} &=&
\frac{n!}{\Gamma(\alpha)(2\alpha)_{n-1}}\sum_{k=0}^{[\frac{n}{2}]}(-1)^k
\frac{2^{n-2k}\Gamma(n-k+\alpha)}{k!(n-2k-1)!}\left(u-v\cos\phi\right)^{n-2k-1} \left(u^2+v^2-2uv \cos\phi\right)^{k}\\ &=&
n\psi_{n-1}(u,v,\phi).
\end{eqnarray*}
Thus (\ref{key}) holds for all integer $n\geq1$.
\end{proof}
\end{lemma}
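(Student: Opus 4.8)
The plan is to reduce everything to an explicit polynomial identity and then differentiate and match coefficients. First I would insert the explicit expansion (\ref{Gegenbauer}) of $C_n^\alpha$ into the definition (\ref{3.7}) of $\psi_n$. Writing $p=u-v\cos\phi$ and $q=u^2+v^2-2uv\cos\phi$, the argument of the Gegenbauer polynomial is $p/\sqrt q$, so each term $\bigl(2p/\sqrt q\bigr)^{n-2k}$ carries a factor $q^{-(n-2k)/2}$ which, multiplied by the prefactor $q^{n/2}$, leaves exactly $q^{k}$. Hence
\[
\psi_n(u,v,\phi)=\frac{n!}{\Gamma(\alpha)(2\alpha)_n}\sum_{k=0}^{[n/2]}(-1)^k\frac{2^{n-2k}\Gamma(n-k+\alpha)}{k!(n-2k)!}\,p^{\,n-2k}q^{k},
\]
a genuine polynomial in the single variable $u$ (with $v,\phi$ as parameters), which is precisely the form recorded at the opening of the proof.

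Next I would differentiate in $u$, using $\partial_u p=1$ and $\partial_u q=2(u-v\cos\phi)=2p$. The product rule applied to $p^{n-2k}q^{k}$ yields two contributions, $(n-2k)p^{n-2k-1}q^{k}$ and $2k\,p^{n-2k+1}q^{k-1}$, so $\psi_n'$ splits into two sums. The $k=[n/2]$ term of the first sum is annihilated by the factor $(n-2k)$ when $n$ is even, and the $k=0$ term of the second sum vanishes; after the shift $k\mapsto k+1$ in the second sum both sums are supported on exactly the same monomials $p^{\,n-1-2k}q^{k}$ with $0\le k\le[(n-1)/2]$, so the two parity cases of the paper are handled uniformly. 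Collecting coefficients, the coefficient of $p^{\,n-1-2k}q^{k}$ in $\psi_n'$ becomes
\[
\frac{n!}{\Gamma(\alpha)(2\alpha)_n}\left[(-1)^k\frac{2^{n-2k}\Gamma(n-k+\alpha)}{k!(n-2k-1)!}-(-1)^{k}\frac{2^{n-2k-1}\Gamma(n-k-1+\alpha)}{k!(n-2k-2)!}\right].
\]

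The heart of the argument, and the step I expect to be the main obstacle, is to show that this bracket collapses to a constant multiple of the corresponding coefficient of $\psi_{n-1}$. Using $\Gamma(n-k+\alpha)=(n-k-1+\alpha)\Gamma(n-k-1+\alpha)$ and $(n-2k-1)!=(n-2k-1)(n-2k-2)!$ to pull out the common factor $(-1)^k 2^{n-2k-1}\Gamma(n-k-1+\alpha)/\bigl(k!(n-2k-1)!\bigr)$, the residual bracket becomes $2(n-k-1+\alpha)-(n-2k-1)=2\alpha+n-1$, independent of $k$. Thus every coefficient of $\psi_n'$ equals $(2\alpha+n-1)$ times the coefficient of $p^{\,(n-1)-2k}q^{k}$ in the expansion of $\psi_{n-1}$. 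Finally I would absorb this constant using the Pochhammer factorisation $(2\alpha)_n=(2\alpha+n-1)(2\alpha)_{n-1}$ together with $n\cdot(n-1)!=n!$: this turns the prefactor $\frac{n!}{\Gamma(\alpha)(2\alpha)_n}(2\alpha+n-1)$ into $n\cdot\frac{(n-1)!}{\Gamma(\alpha)(2\alpha)_{n-1}}$, exactly $n$ times the prefactor of $\psi_{n-1}$. Comparing the two expansions term by term then yields (\ref{key}) for every $n\ge1$.

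I would also record a shorter alternative via generating functions, as a cross-check. Substituting $t=p/\sqrt q$ and $s=r\sqrt q$ into the Gegenbauer generating function gives $G(u,r):=\sum_{m\ge0}q^{m/2}C_m^\alpha(p/\sqrt q)\,r^m=(1-2pr+qr^2)^{-\alpha}$. A direct computation of $\partial_u G$ and $\partial_r G$ shows $\partial_u G=r\bigl(2\alpha G+r\,\partial_r G\bigr)$; extracting the coefficient of $r^n$ and restoring the normalisation $\tfrac{n!}{(2\alpha)_n}$ reproduces (\ref{key}). The term-by-term route above is self-contained, however, and is the one I would write out in full.
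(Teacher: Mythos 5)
Your proof is correct and takes essentially the same route as the paper: expand $\psi_n$ via (\ref{Gegenbauer}), differentiate in $u$, shift the index in the second sum, and absorb the constant $2\alpha+n-1$ through $(2\alpha)_n=(2\alpha+n-1)(2\alpha)_{n-1}$; you in fact spell out the coefficient collapse $2(n-k-1+\alpha)-(n-2k-1)=2\alpha+n-1$ that the paper's parity case-split leaves implicit. One wording slip, not a gap: for odd $n$ the shifted second sum stops at $k=[(n-1)/2]-1$, so the two sums are not supported on exactly the same monomials; the top monomial $k=[(n-1)/2]$ comes from the first sum alone, but your bracket formula still gives the correct value there because its second entry carries the factor $1/(n-2k-2)!=1/(-1)!=0$.
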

\noindent\textbf{Proof of Theorem \ref{th0}.}
We proceed by induction. Suppose that the statement (\ref{recurr}) holds for any given $n$.\\
Differentiate (\ref{recurr}) with respect to the variable $u$, then in view of the three-term recurrence relation (\ref{rec}) and (\ref{key}), we obtain\\

$\displaystyle nu^{n-1}j_{\alpha+n-1}(u)j_{\alpha}(v)-\frac{2\alpha+n}{4(\alpha+n)(\alpha+n+1)}u^{n+1}j_{\alpha+n+1}(u)j_{\alpha}(v)$
\begin{eqnarray}\nonumber
  &=&\nonumber  C_\alpha\,\int_0^\pi n\psi_{n-1}(u,v,\phi)j_{\alpha+n-1}\left(\sqrt{u^2+v^2-2uv \cos\phi}\right)
  (\sin\phi)^{2\alpha}d\phi \\ \nonumber
 & -& \frac{1}{2(\alpha+n+1)}C_\alpha\,\int_0^\pi (\sin\phi)^{2\alpha}\, j_{\alpha+n+1}\left(\sqrt{u^2+v^2-2uv \cos\phi}\right)\\
 &\times& \displaystyle \left\{(u-v\cos\phi)\psi_{n}(u,v,\phi)- \frac{n}{2(\alpha+n)}\left(u^2+v^2-2uv \cos\phi\right)
   \psi_{n-1}(u,v,\phi)\right\}
  d\phi.\label{3.8}
\end{eqnarray}
Using the recurrence hypothesis (\ref{3.8}) simplifies to give\\

$\displaystyle u^{n+1}j_{\alpha+n+1}(u)j_{\alpha}(v)$
 \begin{eqnarray*}  \nonumber & =& \frac{C_\alpha}{2\alpha+n}\int_0^\pi (\sin\phi)^{2\alpha} \,j_{\alpha+n+1}\left(\sqrt{u^2+v^2-2uv \cos\phi}\right)
 \\ &\times& \displaystyle \left\{2(\alpha+n)(u-v\cos\phi)\psi_{n}(u,v,\phi)- n\psi_{n-1}(u,v,\phi)\left(u^2+v^2-2uv \cos\phi\right)
   \right\}
  d\phi.\label{3.9}\end{eqnarray*}
  Having in mind (\ref{3.7}) we obtain by  direct calculations \\

  $\displaystyle 2(\alpha+n)(u-v\cos\phi)\psi_{n}(u,v,\phi)- n\psi_{n-1}(u,v,\phi)\left(u^2+v^2-2uv \cos\phi\right)$
  \begin{eqnarray*}
     &=& \frac{n!}{(2\alpha)_n}
    \left\{2(\alpha+n)\frac{u-v\cos\phi}{\sqrt{u^2+v^2-2uv \cos\phi}} C_n^\alpha
    \left(\frac{u-v\cos\phi}{\sqrt{u^2+v^2-2uv \cos\phi}}\right)
     \right.\\ &-& \left.
     (2\alpha+n-1)C_{n-1}^\alpha\left(\frac{u-v\cos\phi}{\sqrt{u^2+v^2-2uv \cos\phi}}\right)\right\}\left(u^2+v^2-2uv \cos\phi\right)^{\frac{n+1}{2}}.
  \end{eqnarray*}
From the recurrence formula of Gegenbauer polynomials (\ref{Gegnrec}), we rewrite the equality (\ref{3.9}) in the following form
\begin{eqnarray*}u^{n+1}j_{\alpha+n+1}(u)j_{\alpha}(v)&=&\displaystyle \frac{C_\alpha(n+1)!}{(2\alpha)_{n+1}}\int_{0}^\pi
C_{n+1}^\alpha\left(\frac{u-v\cos\phi}{\sqrt{u^2+v^2-2uv \cos\phi}}\right)\\
&\times &\left(u^2+v^2-2uv \cos\phi\right)^{\frac{n+1}{2}}
j_{\alpha+n+1}\left(\sqrt{u^2+v^2-2uv \cos\phi}\right)(\sin\phi)^{2\alpha}d\phi.
\end{eqnarray*}
From which we obtain the recursion relation (\ref{recurr}).\\
Making the substitution $w=\displaystyle \sqrt{u^2+v^2-2uv \cos\phi}$ with the help of the following expression
$$\frac{u-v\cos\phi}{\sqrt{u^2+v^2-2uv \cos\phi}}=\frac{u^2+w^2-v^2}{2uw},$$ we find (\ref{nqq}) which completes the proof of Theorem \ref{th0}.
$\hfill\Box$
\section{The product formula for generalized Hankel functions}
We will now derive a product formula with a  non positive kernel for the generalized Hankel functions 	$B_\lambda^{\kappa,n}$
on $\mathbb{R}$. We shall use the following abbreviations:\\
\noindent\textbf{Notations.} For $x,y\in \mathbb{R}^\star$, $z\in \mathbb{R}$, $n\in \mathbb{N}^\star$ and $\kappa>\frac{n-1}{2n}$, let
\begin{equation}\label{Delta}
\sigma_{x,y,z}^n=\frac{|x|^{\frac{2}{n}}+|y|^{\frac{2}{n}}-|z|^{\frac{2}{n}}}{2| xy|^{\frac{1}{n}}},
\end{equation}
as well as
\begin{equation}\label{ksi}\xi_{\kappa,n}(x,y,z)=\frac{n!\textrm{sgn}(xy)}{(2\kappa n-n)_n}C_n^{\kappa n-\frac{n}{2}}\left(\sigma_{x,y,z}^n\right).\end{equation}
Here, $\textrm{sgn} (x)$ stands for the sign of a real number $x$.\\
Furthermore let us define
\begin{eqnarray}\nonumber
  \mathcal{K}_{\kappa,n}(x,y,z)& = &\frac{M_{\kappa,n}}{2n} K_B^{\kappa n-\frac{n}{2}}\left(|x|^\frac{1}{n},|y|^\frac{1}{n},|z|^\frac{1}{n}\right)\\ &\times&\nonumber \left\{1+ (-1)^n \xi_{\kappa,n}(x,y,z)+\xi_{\kappa,n}(z,x,y)+\xi_{\kappa,n}(y,z,x)\right\}.\\ \label{calK}
\end{eqnarray}
 Here  $K^{\alpha}_{B}$, $C_n^\alpha$ and $M_{\kappa,n}$ are as in (\ref{2.8}), (\ref{Gegenbauer}) and (\ref{constant}).

The following obvious  properties  will play an important role in what
follows:
\noindent\begin{properties} Let $n\in \mathbb{N}^\star$ and $\kappa>\displaystyle \frac{n-1}{2n}$, then
\begin{itemize}
\item [i)]  The mapping $(x,y,z)\rightarrow \sigma^n_{x,y,z}$ is homogeneous of degree $0$.
\item [ii)] We have
\begin{equation}\label{maj}
\left|\xi_{\kappa,n}(x,y,z)\right|\leq1, \quad x,y,z\in\mathbb{R}^\star.
\end{equation}
\item  [iii)] The kernel $\mathcal{K}_{\kappa,n}$ satisfies
\end{itemize}
$$\left\{\begin{array}{ccc}
  \mathcal{K}_{\kappa,n}(x,y,z) &=& \mathcal{K}_{\kappa,n}(y,x,z) \\
 \nonumber \mathcal{K}_{\kappa,n}(x,y,z) &=& \mathcal{K}_{\kappa,n}((-1)^n x,z,y) \\
  \mathcal{K}_{\kappa,n}(x,y,z)&=& \mathcal{K}_{\kappa,n}(z,(-1)^ny,x).  \end{array}\right.
$$
\end{properties}
\begin{proof}
  i) is clear.\\
  ii) follows from (\ref{boudednessC}).\\
  iii) is immediate.
\end{proof}

 We now state and prove the main result in the paper.
\begin{theorem}\label{th1} For $\lambda,\:x,\:y\in \mathbb{R}$, we have
$$
B^{\kappa,n}_\lambda(x)\,B^{\kappa,n}_\lambda(y) = \int_\mathbb{R} B^{\kappa,n}_\lambda(z) \,d\nu_{x,y}^{\kappa,n}(z),
$$
where
\begin{equation}\label{main} d\nu_{x,y}^{\kappa,n}(z)=\left\{\begin{array}{ccc}
                                    \mathcal{K}_{\kappa,n}(x,y,z)d\mu_{\kappa,n}(z) & \textrm{if} & xy\neq 0 \\
                                    d\delta_x(z) & \textrm{if} & y=0 \\
                                    d\delta_y(z) & \textrm{if} & x=0
                                  \end{array}\right.
\end{equation}
and $d\mu_{\kappa,n}$ is as in (\ref{mu}).\end{theorem}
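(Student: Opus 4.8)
The plan is to reduce to the generic case $xy\neq 0$, expand the left-hand side into four Bessel products via the definition (\ref{1.1}), transport the right-hand side integral to the half-line $(0,\infty)$, and then recognize each of the four pieces. First I would write $\alpha=\kappa n-\frac n2$, so $\kappa n+\frac n2=\alpha+n$, and set $s=n|\lambda|^{1/n}$, $u=|x|^{1/n}$, $v=|y|^{1/n}$. When $y=0$ one has $B^{\kappa,n}_\lambda(0)=j_\alpha(0)=1$ (the second summand of (\ref{1.1}) vanishes because of the factor $\lambda x$), so both sides reduce to $B^{\kappa,n}_\lambda(x)$, matching $d\nu^{\kappa,n}_{x,0}=d\delta_x$; the case $x=0$ is identical. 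For $xy\neq 0$ I would rewrite $\lambda x\,j_{\alpha+n}(su)$ using $(su)^n=n^n|\lambda x|$ as $\frac{1}{n^n}\textrm{sgn}(\lambda x)(su)^nj_{\alpha+n}(su)$, so that $B^{\kappa,n}_\lambda(x)=j_\alpha(su)+c\,\textrm{sgn}(\lambda x)(su)^nj_{\alpha+n}(su)$ with $c=\frac{(-i)^n}{2^n(\alpha+1)_n}$, and multiply the analogous expressions for $x$ and $y$ to obtain four products.

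Next I would transport $\int_{\mathbb R}B^{\kappa,n}_\lambda(z)\mathcal K_{\kappa,n}(x,y,z)d\mu_{\kappa,n}(z)$ to $(0,\infty)$. Since $d\mu_{\kappa,n}$ and $K_B^\alpha$ depend on $z$ only through $|z|$, the change of variables $|z|=w^n$ turns $d\mu_{\kappa,n}(z)$ into $\frac{n}{M_{\kappa,n}}w^{2\alpha+1}dw$ on each half-line. The Gegenbauer arguments $\sigma^n_{\,\cdot}$ are even under $z\mapsto -z$, so writing $\mathcal K_{\kappa,n}=\frac{M_{\kappa,n}}{2n}K_B^\alpha\{R+\textrm{sgn}(z)S\}$ with $R=1+(-1)^n\textrm{sgn}(xy)G_{x,y,z}$ the sign-independent part and $S=\textrm{sgn}(x)G_{z,x,y}+\textrm{sgn}(y)G_{y,z,x}$ (here $G$ is the Gegenbauer factor of $\xi$ in (\ref{ksi}) stripped of its sign), and splitting $B^{\kappa,n}_\lambda(z)=P+\textrm{sgn}(z)Q$ with $P=j_\alpha(sw)$ and $Q=c\,\textrm{sgn}(\lambda)(sw)^nj_{\alpha+n}(sw)$, the two half-lines add to $(P+Q)(R+S)+(P-Q)(R-S)=2(PR+QS)$. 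The normalizing constant $\frac{M_{\kappa,n}}{2n}$ in (\ref{calK}) cancels the Jacobian $\frac{n}{M_{\kappa,n}}$ and this factor $2$, leaving $\int_0^\infty K_B^\alpha(u,v,w)\,(PR+QS)\,w^{2\alpha+1}dw$. The elegant point is that the even/odd split of $\mathcal K_{\kappa,n}$ in $\textrm{sgn}(z)$ matches the $j_\alpha$/$j_{\alpha+n}$ split of $B^{\kappa,n}_\lambda(z)$.

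It then remains to evaluate the four integrals in $PR+QS$ and identify them with the four products. The $P\cdot 1$ integral is $j_\alpha(su)j_\alpha(sv)$ by (\ref{2.7}); the two $Q$-integrals, whose Gegenbauer arguments are $\frac{u^2+w^2-v^2}{2uw}$ and $\frac{v^2+w^2-u^2}{2vw}$, reproduce $c\,\textrm{sgn}(\lambda x)(su)^nj_{\alpha+n}(su)j_\alpha(sv)$ and its $u\leftrightarrow v$ mirror directly from (\ref{nqq}), after rescaling the dummy variable by $s$ (legitimate by the homogeneity (\ref{2.9}) of $K_B^\alpha$ and the degree-zero homogeneity of the Gegenbauer argument). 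The last integral, $(-1)^n\textrm{sgn}(xy)\frac{n!}{(2\alpha)_n}\int_0^\infty K_B^\alpha\,j_\alpha(sw)\,C_n^\alpha\!\big(\frac{u^2+v^2-w^2}{2uv}\big)w^{2\alpha+1}dw$, must match the remaining product $c^2\,\textrm{sgn}(\lambda x)\textrm{sgn}(\lambda y)(su)^n(sv)^nj_{\alpha+n}(su)j_{\alpha+n}(sv)$.

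This last identity is the main obstacle, since, unlike the other three, it is \emph{not} an instance of (\ref{nqq}): it expresses a product of two $j_{\alpha+n}$'s through $j_\alpha$. I would prove it by substituting $w=\delta(u,v,\phi)$ to get $C_\alpha\int_0^\pi j_\alpha(s\delta)\,C_n^\alpha(\cos\phi)(\sin\phi)^{2\alpha}d\phi$, expanding $j_\alpha(s\delta)$ by Gegenbauer's addition theorem (\ref{2.2}), and extracting the $m=n$ term by the orthogonality relation (\ref{2.4}); all other terms drop out. The constant produced matches $c^2$ exactly, the cancellation resting on Legendre's duplication formula $\Gamma(2\alpha)=\frac{2^{2\alpha-1}}{\sqrt\pi}\Gamma(\alpha)\Gamma(\alpha+\frac12)$ and on the identity $(-i)^{2n}=(-1)^n$, which is precisely the sign $(-1)^n$ placed in front of $\xi_{\kappa,n}(x,y,z)$ in (\ref{calK}). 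Collecting the four matched pieces yields the stated formula; besides this crux, the only care needed is the sign and constant bookkeeping, and the justification of the term-by-term integration in the addition-theorem step over the compact interval $[0,\pi]$.
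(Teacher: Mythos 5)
Your proposal is correct and follows essentially the same route as the paper: the same even/odd splitting of $B^{\kappa,n}_\lambda$, the even-even piece via the Bessel product formula (\ref{2.7}), the mixed pieces via (\ref{nqq}) with the homogeneity rescaling, and the odd-odd piece via exactly the identity the paper isolates as the lemma (\ref{key1}), proved the same way (Gegenbauer's addition theorem, orthogonality, duplication formula). The only difference is presentational: you decompose the right-hand integral and match its four pieces to the four products, while the paper computes each product and sums, which is the same argument read in the opposite direction.
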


Before proving the theorem, we need the following lemma.
\begin{lemma} Let $u,v\in [0,+\infty[$. Then for all $n\in \mathbb{N}$, we have\\

  $\displaystyle \frac{(uv)^n}{2^{2n}}j_{\alpha+n}(u)j_{\alpha+n}(v)$
\begin{equation}\label{key1}
 =\frac{n!\left((\alpha+1)_n\right)^2}{(2\alpha)_n}\int_0^\infty j_\alpha(w)K_B^\alpha(u,v,w)\,C_n^\alpha\left(\frac{u^2+v^2-w^2}{2uv}\right)w^{2\alpha+1} dw.\end{equation}
\end{lemma}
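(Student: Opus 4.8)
The plan is to obtain (\ref{key1}) as the $n$-th Gegenbauer coefficient of Gegenbauer's addition theorem, in exactly the way that the Bessel product formula (\ref{2.7})---which is precisely the case $n=0$ of (\ref{key1})---arises as its zeroth coefficient. First I would rewrite the addition theorem (\ref{2.2}) in terms of the normalized Bessel functions. From (\ref{2.5}) one has $J_{\alpha+m}(u)/u^\alpha=u^m\,j_{\alpha+m}(u)/(2^{\alpha+m}\Gamma(\alpha+m+1))$, so using $\alpha\Gamma(\alpha)=\Gamma(\alpha+1)$ the identity (\ref{2.2}) takes the form, with $\delta=\sqrt{u^2+v^2-2uv\cos\phi}$,
\[
j_\alpha(\delta)=\Gamma(\alpha+1)\Gamma(\alpha)\sum_{m=0}^{\infty}\frac{(\alpha+m)\,(uv)^m}{2^{2m}\bigl(\Gamma(\alpha+m+1)\bigr)^2}\,j_{\alpha+m}(u)\,j_{\alpha+m}(v)\,C_m^\alpha(\cos\phi).
\]
This exhibits $\tfrac{(uv)^m}{2^{2m}}j_{\alpha+m}(u)j_{\alpha+m}(v)$ as, up to an explicit Gamma factor, the $m$-th coefficient of $j_\alpha(\delta)$ in the Gegenbauer basis $\{C_m^\alpha\}$.

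Next I would extract the $n$-th coefficient by orthogonality. Multiplying the last display by $C_n^\alpha(\cos\phi)(\sin\phi)^{2\alpha}$ and integrating over $\phi\in[0,\pi]$, the substitution $t=\cos\phi$ turns $(\sin\phi)^{2\alpha}d\phi$ into $(1-t^2)^{\alpha-1/2}dt$, so (\ref{2.4}) annihilates every term except $m=n$ and leaves
\[
\int_0^\pi j_\alpha(\delta)\,C_n^\alpha(\cos\phi)(\sin\phi)^{2\alpha}\,d\phi=\Gamma(\alpha+1)\Gamma(\alpha)\frac{(\alpha+n)(uv)^n}{2^{2n}\bigl(\Gamma(\alpha+n+1)\bigr)^2}\,j_{\alpha+n}(u)\,j_{\alpha+n}(v)\,h_n,
\]
where $h_n=\frac{\pi\Gamma(2\alpha+n)}{2^{2\alpha-1}\Gamma(n+1)(n+\alpha)\Gamma(\alpha)^2}$ is the squared norm furnished by (\ref{2.4}). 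Solving for $\tfrac{(uv)^n}{2^{2n}}j_{\alpha+n}(u)j_{\alpha+n}(v)$ presents it as an explicit constant times the $\phi$-integral on the left.

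Finally I would pass to the variable $w=\delta(u,v,\phi)$, which is exactly the change of variables used to deduce (\ref{2.7}) from the Sonine product formula; under it $C_\alpha(\sin\phi)^{2\alpha}d\phi=K_B^\alpha(u,v,w)\,w^{2\alpha+1}dw$ and $\cos\phi=(u^2+v^2-w^2)/(2uv)$. This turns the $\phi$-integral into the desired $w$-integral with kernel $K_B^\alpha$ and Gegenbauer argument $(u^2+v^2-w^2)/(2uv)$. It then remains only to verify that the accumulated constant collapses to $n!\,((\alpha+1)_n)^2/(2\alpha)_n$: this is a bookkeeping step that uses $C_\alpha=\Gamma(\alpha+1)/(\sqrt\pi\,\Gamma(\alpha+\tfrac12))$ together with the Legendre duplication formula $\Gamma(\alpha)\Gamma(\alpha+\tfrac12)=2^{1-2\alpha}\sqrt\pi\,\Gamma(2\alpha)$, after which the factor $(\alpha+n)$ cancels and all powers of $2$ and $\sqrt\pi$ disappear.

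The only genuine obstacle is justifying the term-by-term integration in the coefficient extraction. I would argue that, for fixed $u,v$, the series above converges absolutely and uniformly in $\phi\in[0,\pi]$: using $|j_{\alpha+m}|\le 1$ on $[0,\infty)$ together with the bound (\ref{boudednessC}) and (\ref{gegen}), the general term is dominated by $\tfrac{(\alpha+m)(uv)^m}{2^{2m}(\Gamma(\alpha+m+1))^2}\,C_m^\alpha(1)$, which is summable thanks to the super-exponential decay produced by $1/(\Gamma(\alpha+m+1))^2$. Hence the interchange of sum and integral is legitimate, and everything else reduces to the elementary Gamma-function algebra indicated above; as a consistency check, taking $n=0$ recovers the Bessel product formula (\ref{2.7}).
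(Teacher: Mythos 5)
Your proposal is correct and takes essentially the same route as the paper's own proof: both extract the $n$-th Gegenbauer coefficient from the addition theorem (\ref{2.2}) via the orthogonality relation (\ref{2.4}), convert to normalized Bessel functions by (\ref{2.5}), substitute $w=\sqrt{u^2+v^2-2uv\cos\phi}$, and collapse the constant with the Legendre duplication formula. The only differences are cosmetic (you normalize before extracting the coefficient rather than after) plus your explicit justification of the term-by-term integration, a point the paper passes over in silence.
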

\begin{proof} Gegenbauer's addition formula (\ref{2.2}) for Bessel functions leads to \\

$\displaystyle \int_0^\pi \frac{J_\alpha(\sqrt{u^2+v^2-2uv\cos \phi})}{\{\sqrt{u^2+v^2-2uv\cos \phi}\}^\alpha}C_n^\alpha(\cos \phi)(\sin\phi)^{2\alpha}d\phi$
$$  =2^\alpha \Gamma(\alpha)(\alpha+n)\frac{J_{\alpha+n}(u)}{u^\alpha}\frac{J_{\alpha+n}(v)}{v^\alpha}\int_0^\pi\left[C_n^\alpha(\cos \phi)\right]^2(\sin\phi)^{2\alpha}d\phi.
$$
By virtue of  (\ref{2.4}) and (\ref{2.5}), this leads to \\

$\displaystyle\frac{(uv)^n}{2^{2n}}j_{\alpha+n}(u)j_{\alpha+n}(v)$
$$
=\frac{n!\Gamma(\alpha)\Gamma(\alpha+n+1)^2}{\pi 2^{-2\alpha+1}\Gamma(2\alpha+n)\Gamma(\alpha+1)}\int_0^\pi j_\alpha(\sqrt{u^2+v^2-2uv\cos \phi})C_n^\alpha(\cos \phi)(\sin\phi)^{2\alpha}d\phi.$$
Substitution of $w:=\sqrt{u^2+v^2-2uv\cos \phi}$ and the duplication formula yield the
statement.
\end{proof}
\noindent\textit{Proof of Theorem \ref{th1}.} Recall the generalized Hankel functions $B^{\kappa,n}_\lambda$
 defined in (\ref{1.1}).\\
Let us split the generalized Hankel kernel in two parts
$$B^{\kappa,n}_\lambda=B^{\kappa,n}_{\lambda,e}+B^{\kappa,n}_{\lambda,o}$$
into its even part
$$B^{\kappa,n}_{\lambda,e}(x)=j_{\kappa n-\frac{n}{2}}\left(n|\lambda x|^\frac{1}{n}\right)$$
and its odd part
$$B^{\kappa,n}_{\lambda,o}(x)= (\frac{n}{2})^n\frac{(-i)^n\lambda x}{(\kappa n-\frac{n}{2}+1)_n}j_{\kappa n+\frac{n}{2}}\left(n|\lambda x|^\frac{1}{n}\right).$$
 Setting $\alpha=\kappa n-\frac{n}{2}$, $u=n|\lambda x|^\frac{1}{n}$, $v=n| \lambda y|^\frac{1}{n}$, and rewrite the integrand with the substitution
\begin{equation}\label{change}
  w=n(|\lambda| z)^\frac{1}{n},\quad z\in \mathbb{R}_+.
\end{equation}
 (\ref{2.7}) becomes\\

 $\displaystyle B^{\kappa,n}_{\lambda,e}(x)\,B^{\kappa,n}_{\lambda,e}(y)$
 $$
=n^{2\kappa n-n+1}|\lambda|^{2\kappa+\frac{2}{n}-1}\int_0^\infty B^{\kappa,n}_{\lambda,e}(z)
K_B^{\kappa n-\frac{n}{2}}(n|\lambda x|^{\frac{1}{n}},n|\lambda y|^{\frac{1}{n}},n|\lambda|z^{\frac{1}{n}})\,z^{2\kappa+\frac{2}{n}-2}dz,
$$
where $K_B^{\kappa n-\frac{n}{2}}$ is given by relation (\ref{2.8}).\\
By homogeneity of $K_B^{\kappa n-\frac{n}{2}}$ (\ref{2.9}), it is clear that the following identity holds:
\begin{eqnarray}\nonumber
 B^{\kappa,n}_{\lambda,e}(x)\,B^{\kappa,n}_{\lambda,e}(y) &=& \frac{1}{n}\int_0^\infty B^{\kappa,n}_{\lambda,e}(z)K_B^{\kappa n-\frac{n}{2}}(| x|^{\frac{1}{n}},| y|^{\frac{1}{n}},z^{\frac{1}{n}})\,z^{2\kappa+\frac{2}{n}-2}dz \\
   &=& \frac{1}{2n}\int_0^\infty B^{\kappa,n}_{\lambda}(z)
   K_B^{\kappa n-\frac{n}{2}}(| x|^{\frac{1}{n}},| y|^{\frac{1}{n}},z^{\frac{1}{n}})\,z^{2\kappa+\frac{2}{n}-2}dz. \label{eveneven}
\end{eqnarray}

We turn now to purely odd products.

Making the same substitution and changing the integration variable (\ref{change}) in (\ref{key1}), we obtain\\

$\displaystyle n^{2n}\lambda^2|xy|^nj_{\kappa n+\frac{n}{2}}(n|\lambda x|^\frac{1}{n})j_{\kappa n+\frac{n}{2}}(n|\lambda y|^\frac{1}{n})$
\begin{eqnarray*}&=&A_{\kappa,n}|\lambda|^{2\kappa+\frac{2}{n}-1}\,\int_0^\infty j_{\kappa n-\frac{n}{2}}(n(|\lambda| z)^\frac{1}{n})K_B^{\kappa n-\frac{n}{2}}\left(n|\lambda x|^\frac{1}{n},n|\lambda y|^\frac{1}{n},n(|\lambda| z)^\frac{1}{n}\right)\\ &\times & C_n^{\kappa n-\frac{n}{2}}\left(\sigma^n_{n^n\lambda x,n^n\lambda y,n^n\lambda z}\right)\,z^{2\kappa+\frac{2}{n}-2}dz,\end{eqnarray*}
where $\displaystyle A_{\kappa,n}=\frac{n!n^{2\kappa-n+1}\left((\kappa n-\frac{n}{2}+1)_n\right)^2}{(2\alpha)_n}$  and $\sigma_{x,y,z}^n$ is given by (\ref{Delta}).\\
The homogeneity of $K_B^{\kappa n-\frac{n}{2}}$ (\ref{2.9}) and $\sigma^n$, leads to\\

$\displaystyle n^{2n}\lambda^2|xy|^nj_{\kappa n+\frac{n}{2}}(n|\lambda x|^\frac{1}{n})j_{\kappa n+\frac{n}{2}}(n|\lambda y|^\frac{1}{n})$
$$=A_{\kappa,n}\,\int_0^\infty j_{\kappa n-\frac{n}{2}}(n(|\lambda| z)^\frac{1}{n})K_B^{\kappa n-\frac{n}{2}}\left(|x|^\frac{1}{n},|y|^\frac{1}{n},z^\frac{1}{n}\right) C_n^{\kappa n-\frac{n}{2}}\left(\sigma^n_{x,y,z}\right)\,z^{2\kappa+\frac{2}{n}-2}dz.$$
By arguing  by evenness and oddness, we deduce, for all, $x,y\in \mathbb{R}^\star$,  \\

$\displaystyle
B^{\kappa,n}_{\lambda,o}(x)\,B^{\kappa,n}_{\lambda,o}(y) $
 \begin{eqnarray}\nonumber&=&\displaystyle\frac{(-1)^nn!\textrm{sgn}(xy)}{n(2\kappa n-n)_n}\int_0^\infty B^{\kappa,n}_{\lambda,e}(z)
 K_B^{\kappa n-\frac{n}{2}}\left(|x|^\frac{1}{n},|y|^\frac{1}{n},z^\frac{1}{n}\right)C_n^{\kappa n-\frac{n}{2}}(\sigma_{x,y,z}^n)\,
 z^{2\kappa-2+\frac{2}{n}}dz \nonumber\\
&=&\frac{1}{2n}\int_{\mathbb{R}}B^{\kappa,n}_\lambda(z)K^{\kappa n-\frac{n}{2}}_{B}\left(| x |^{\frac{1}{n}},| y|^{\frac{1}{n}},
|z|^{\frac{1}{n}}\right)\;\xi_{\kappa,n}(x,y,z)\,|z|^{2\kappa-2+\frac{2}{n}}dz,\label{oddodd}
\end{eqnarray}
where $\xi_{\kappa,n}$ is given by (\ref{ksi}).

We consider now the mixed products.

 Substitute $\alpha=\kappa n-\frac{n}{2}$, $u=n|\lambda x|^\frac{1}{n}$, $v=n| \lambda y|^\frac{1}{n}$,
 perform a change of variable (\ref{change}) in (\ref{nqq}), and use the homogeneity of the Bessel kernel $K_B^{\kappa n-\frac{n}{2} }$
 and $\sigma_{z,x,y}^n$, it derives\\

 $\displaystyle
n^n|\lambda x|j_{\kappa n+\frac{n}{2}}(n|\lambda x |^{\frac{1}{n}})j_{\kappa n-\frac{n}{2}}(n|\lambda y |^{\frac{1}{n}})$
$$=\frac{1}{n}\displaystyle \frac{n!}{(2\kappa n-n)_n}
\int_{0}^{\infty}n^n|\lambda |zj_{\kappa n+\frac{n}{2}}(n|\lambda|^{\frac{1}{n}}z^{\frac{1}{n}})K^{\kappa n-\frac{n}{2}}_{B}
\left(| x |^{\frac{1}{n}},| y|^{\frac{1}{n}},z^{\frac{1}{n}}\right)
C_n^{\kappa n-\frac{n}{2}}(\sigma_{z,x,y}^n)\,z^{2\kappa+\frac{2}{n}-2}dz.
$$
By arguing again  by evenness and oddness, we deduce, for all, $x,y\in \mathbb{R}^\star$,  \\

$\displaystyle B^{\kappa,n}_{\lambda,o}(x)\;B^{\kappa,n}_{\lambda,e}(y)$
\begin{eqnarray}&=& \nonumber \frac{n!}{n(2\kappa n-n)_n}
\int_0^\infty B^{\kappa,n}_{\lambda,o}(z)K^{\kappa n-\frac{n}{2}}_{B}\left(| x |^{\frac{1}{n}},| y|^{\frac{1}{n}},z^{\frac{1}{n}}\right)
\textrm{\textrm{sgn}}(xz)C_n^{\kappa n-\frac{n}{2}}(\sigma_{z,x,y}^n)\,z^{2\kappa+\frac{2}{n}-2}dz\\
&=& \nonumber \frac{n!}{2n(2\kappa n-n)_n}
\int_{\mathbb{R}}B^{\kappa,n}_\lambda(z)K^{\kappa n-\frac{n}{2}}_{B}\left(| x |^{\frac{1}{n}},| y|^{\frac{1}{n}},|z|^{\frac{1}{n}}\right)
\textrm{\textrm{sgn}}(xz)C_n^{\kappa n-\frac{n}{2}}(\sigma_{z,x,y}^n)\,|z|^{2\kappa+\frac{2}{n}-2}dz\nonumber \\
&=&\frac{1}{2n}\int_{\mathbb{R}}B^{\kappa,n}_\lambda(z)K^{\kappa n-\frac{n}{2}}_{B}\left(| x |^{\frac{1}{n}},| y|^{\frac{1}{n}},|z|^{\frac{1}{n}}\right)
\;\xi_{\kappa,n}(z,x,y)\,|z|^{2\kappa-2+\frac{2}{n}}dz.\label{mixed}
\end{eqnarray}
Putting together (\ref{eveneven}), (\ref{oddodd}) and (\ref{mixed})  finishes the proof of Theorem \ref{th1}.
$\hfill\Box$\\
\noindent\textbf{Remark.} As announced in the introduction, the measure $\nu_{x,y}^{\kappa,n}$ is not positive.\\
Indeed, let $0<y<x$, then from (\ref{calK}) and (\ref{Delta}), we obtain
$$\begin{array}{ccc}

  \sigma^n_{x^n,y^n,(x-y)^n}=&-\sigma^n_{(x-y)^n,x^n,y^n}=&-\sigma^n_{(x-y)^n,y^n,x^n}=1 \\
  \sigma^n_{x^n,y^n,(x+y)^n}=&-\sigma^n_{(x+y)^n,x^n,y^n}=&-\sigma^n_{(x+y)^n,y^n,x^n}=1.\end{array}
$$
In the other hand, (\ref{gegen}) leads to
$$C_n^{\kappa n-\frac{n}{2}}(1)=(-1)^nC_n^{\kappa n-\frac{n}{2}}(-1)=\frac{(2\kappa n-n)_n}{n!}.$$
Since $K_B^{\kappa n-\frac{n}{2}}$ is positive, we deduce that
$$\mathcal{K}_{\kappa,2n+1}(x^n,y^n,-(x-y)^n)<0;\quad \mathcal{K}_{\kappa,2n}(x^n,y^n,-(x+y)^n)<0.$$
Hence there exists a neighborhood of $-(x-y)^n$ in supp$ (\nu_{x^n,y^n}^{\kappa,2n+1})$ (rep. of $-(x+y)^n$ in supp$ (\nu_{x^n,y^n}^{\kappa,2n})$)
such that the function
$z\rightarrow \mathcal{K}_{\kappa,2n+1}(x,y,z)$ (resp. $z\rightarrow \mathcal{K}_{\kappa,2n}(x,y,z)$ is strictly negative.

The following theorem gives some properties of the measure $\nu_{x,y}^{\kappa,n}$ as announced in the introduction.
\begin{theorem}\label{th2}
  For $n\in \mathbb{N}^\star$, $\kappa>\frac{n-1}{2n}$ and $x,y\in\mathbb{R}$, we have
  \begin{itemize}
    \item [i)] $\displaystyle \textrm{supp}(\nu_{x,y}^{\kappa,n})(\mathbb{R})\subset I_{x,y}=\left\{z\in \mathbb{R}/\; ||x|^\frac{1}{n}-|y|^\frac{1}{n}|<|z|^\frac{1}{n}<|x|^\frac{1}{n}+|y|^\frac{1}{n}\right\}$.
    \item [ii)] $\displaystyle \nu_{x,y}^{\kappa,n}(\mathbb{R})=1$.
    \item [iii)] $\displaystyle \left\|\nu_{x,y}^{\kappa,n}\right\|\: \leq 4$.
  \end{itemize}
\end{theorem}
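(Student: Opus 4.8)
The plan is to handle the three items separately, reducing each to a fact already recorded in the excerpt; throughout I set $\alpha=\kappa n-\frac{n}{2}$, so that $2\alpha+1=2\kappa n-n+1$ and $\alpha>-\frac12$ under the standing hypothesis $\kappa>\frac{n-1}{2n}$. First I would dispose of the degenerate cases $x=0$ or $y=0$, where $d\nu_{x,y}^{\kappa,n}$ is a Dirac mass by (\ref{main}): a point mass has total variation and total mass equal to $1$ and support a single point, so i)--iii) are immediate. The content lies in the case $xy\neq0$, where $d\nu_{x,y}^{\kappa,n}(z)=\mathcal{K}_{\kappa,n}(x,y,z)\,d\mu_{\kappa,n}(z)$. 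For i), the support is read off from the factor $K_B^{\alpha}(|x|^{1/n},|y|^{1/n},|z|^{1/n})$ in $\mathcal{K}_{\kappa,n}$ via (\ref{calK}): by its very definition (\ref{2.8}), $K_B^{\alpha}(u,v,w)=0$ unless $|u-v|\le w\le u+v$, which with $u=|x|^{1/n}$, $v=|y|^{1/n}$, $w=|z|^{1/n}$ is exactly the condition defining (the closure of) $I_{x,y}$. As the remaining factors in (\ref{calK}) are finite, this places $\mathrm{supp}(\nu_{x,y}^{\kappa,n})$ inside $\overline{I_{x,y}}$.

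For ii), instead of integrating $\mathcal{K}_{\kappa,n}$ term by term I would use the product formula itself at $\lambda=0$, which is permitted since Theorem \ref{th1} is stated for all $\lambda\in\mathbb{R}$. Evaluating (\ref{1.1}) at $\lambda=0$ gives $B_0^{\kappa,n}\equiv1$, because the even part is $j_{\alpha}(0)=1$ and the odd part carries a factor $\lambda x$ that vanishes. Substituting this into Theorem \ref{th1} yields $1=1\cdot1=\int_{\mathbb{R}}B_0^{\kappa,n}(z)\,d\nu_{x,y}^{\kappa,n}(z)=\nu_{x,y}^{\kappa,n}(\mathbb{R})$, which is precisely ii). This is cleaner than, but fully consistent with, the direct verification that the three Gegenbauer terms in (\ref{calK}) integrate to zero, a fact that is exactly the $\lambda=0$ specialization of (\ref{oddodd}) and (\ref{mixed}).

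For iii), since $d\mu_{\kappa,n}$ is a positive measure the total variation is $\|\nu_{x,y}^{\kappa,n}\|=\int_{\mathbb{R}}|\mathcal{K}_{\kappa,n}(x,y,z)|\,d\mu_{\kappa,n}(z)$. Because $K_B^{\alpha}\ge0$, the triangle inequality together with the uniform bound $|\xi_{\kappa,n}|\le1$ from (\ref{maj}) gives $|1+(-1)^n\xi_{\kappa,n}(x,y,z)+\xi_{\kappa,n}(z,x,y)+\xi_{\kappa,n}(y,z,x)|\le4$, so it remains to check that $\frac{M_{\kappa,n}}{2n}\int_{\mathbb{R}}K_B^{\alpha}(|x|^{1/n},|y|^{1/n},|z|^{1/n})\,d\mu_{\kappa,n}(z)=1$. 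Inserting $d\mu_{\kappa,n}$ from (\ref{mu}), using evenness in $z$ to reduce to $\int_0^\infty$, and substituting $w=z^{1/n}$ (so that $z^{2\kappa+\frac{2}{n}-2}\,dz=n\,w^{2\alpha+1}\,dw$) collapses this to $\int_0^\infty K_B^{\alpha}(|x|^{1/n},|y|^{1/n},w)\,w^{2\alpha+1}\,dw=1$ by the normalization (\ref{Kalpha}). Combining these gives $\|\nu_{x,y}^{\kappa,n}\|\le4$.

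The only genuinely delicate step is the constant bookkeeping in iii): one must verify that the prefactor $M_{\kappa,n}/(2n)$ from (\ref{calK}), the normalizing factor $(M_{\kappa,n})^{-1}$ in $d\mu_{\kappa,n}$, the factor $2$ gained from passing to the half-line, and the Jacobian $n$ produced by $w=z^{1/n}$ cancel exactly, leaving precisely the integral (\ref{Kalpha}) with no stray constant. Once this cancellation is confirmed, all three assertions follow at once from the support and mass of $K_B^{\alpha}$, the bound (\ref{maj}), and the identity $B_0^{\kappa,n}\equiv1$.
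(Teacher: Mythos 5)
Your proposal is correct and follows essentially the same route as the paper: i) from the support condition built into $K_B^{\alpha}$ in (\ref{2.8}), ii) by evaluating the product formula of Theorem \ref{th1} at $\lambda=0$ using $B_0^{\kappa,n}\equiv 1$, and iii) by combining the positivity of $K_B^{\alpha}$, the bound (\ref{maj}) on the three Gegenbauer terms (giving the factor $4$), and the normalization (\ref{Kalpha}) after the substitution $w=|z|^{1/n}$. Your explicit treatment of the Dirac-mass cases $xy=0$ and your careful constant bookkeeping (which, incidentally, is cleaner than the paper's displayed intermediate bound, whose prefactor contains a small typo) are welcome additions but do not change the argument.
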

\begin{proof} i) is clear.\\
  ii) follows from Theorem \ref{th1} and the fact that $B^{\kappa,n}_0(x)=1$.\\
  iii) From the definition (\ref{main}) of the measure $\nu_{x,y}^{\kappa,n}$, it follows from (\ref{boudednessC})
  $$\left\|\nu_{x,y}^{\kappa,n}\right\|\leq \frac{2}{n}\int_{0}^{+\infty}
  K_B^{\kappa n-\frac{n}{2}}(|x|^\frac{1}{n},|y|^\frac{1}{n},|z|^\frac{1}{n})|z|^{2\kappa+\frac{2}{n}-2}dz=4.$$
  Above we used the positivity of $K_B^{\kappa n-\frac{n}{2}}$, relation (\ref{maj}) and relation (\ref{Kalpha}).\\
  This finishes the proof of Theorem \ref{th2}.
\end{proof}

\section{Convolution structure}

Let us denote by

$\bullet\quad\mathcal{C}_b(\mathbb{R})$ the space of bounded continuous functions on $\mathbb{R}$.

$\bullet\quad\mathcal{C}_c(\mathbb{R})$ the space of continuous functions on $\mathbb{R}$ with compact support.

 Recall that for $n\in \mathbb{N}^\star$ and $\kappa>\frac{n-1}{2n}$, the generalized Hankel transform $\mathcal{F}_{{\kappa,n}}$ is defined by (\ref{Fka}). Its inverse is given
by
$$\label{5.1}
\mathcal{F}_{{\kappa,n}}^{-1}(g)(x)=\mathcal{F}_{{\kappa,n}}(g)((-1)^nx),\quad x\in \mathbb{R}.$$

The generalized Hankel transform $\mathcal{F}_{{\kappa,n}}$ can be expressed in terms of Hankel transform
\begin{equation}\label{5.2}
  \mathcal{H}_{\alpha}(f)(\lambda)=\frac{1}{2^{\alpha-1}\Gamma(\alpha+1)}\int_0^{+\infty}f(t)j_\alpha(t\lambda)t^{2\alpha+1}dt,\quad \lambda\in ]0,+\infty[.
\end{equation}

More precisely:
\begin{proposition}
 Let  $n\in \mathbb{N}^\star$, $ \kappa>\frac{n-1}{n} $, and  $ f \in \mathcal{C}_c(\mathbb{R})$. Then
 $$\mathcal{F}_{\kappa,n}(f)(\lambda)=\frac{1}{2n^{\kappa n-\frac{n}{2}+1}}
 \mathcal{H}_{\kappa n-\frac{n}{2}}(g_n)(|\lambda|^{\frac{1}{n}})+\frac{(-i)^n\lambda}{n!2^{n}n^{\kappa n-\frac{n}{2}+1}}
 \mathcal{H}_{\kappa n-\frac{n}{2}}(J_n(f_o))(|\lambda|^{\frac{1}{n}}),\quad \lambda\in \mathbb{R},$$
 where $g_n$ and $J_n$ are the functions defined on $\mathbb{R}_+$ by
$$g_n(t)=f_e((\frac{t}{n})^{n}) ;\quad J_n(f_o)(t)=\int_s^\infty f_o((\frac{t}{n})^{n})(t^2-s^2)^{n-1}t^{-n+1} dt,$$
and $f_e$ and $f_o$ are the even and odd parts of the function $f$.
\end{proposition}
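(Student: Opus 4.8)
The plan is to use the even--odd decomposition of both $f$ and the kernel, reduce to half-line integrals, and identify a Hankel transform of order $\alpha:=\kappa n-\frac n2$ after the substitution $x=(t/n)^n$. Write $f=f_e+f_o$ and, exactly as in the proof of Theorem~\ref{th1}, split $B^{\kappa,n}_\lambda=B^{\kappa,n}_{\lambda,e}+B^{\kappa,n}_{\lambda,o}$, the even part being $j_{\kappa n-\frac n2}(n|\lambda x|^{1/n})$ and the odd part being proportional to $\lambda x\,j_{\kappa n+\frac n2}(n|\lambda x|^{1/n})$. Since $d\mu_{\kappa,n}$ in (\ref{mu}) is an even measure, in $\mathcal F_{\kappa,n}(f)(\lambda)=\int_{\mathbb R}f\,B^{\kappa,n}_\lambda\,d\mu_{\kappa,n}$ the mixed products $f_eB^{\kappa,n}_{\lambda,o}$ and $f_oB^{\kappa,n}_{\lambda,e}$ are odd and integrate to zero; this leaves only the even--even and odd--odd contributions, each equal to twice its integral over $(0,+\infty)$.

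For the even contribution I would substitute $x=(t/n)^n$, so that $n|\lambda x|^{1/n}=|\lambda|^{1/n}t$ and $|x|^{2\kappa+\frac2n-2}\,dx$ becomes a constant multiple of $t^{2\alpha+1}\,dt$, with $2\alpha+1=2\kappa n-n+1$. The function $g_n(t)=f_e((t/n)^n)$ then appears, and the integral is, up to the normalizing constant of (\ref{constant}) and a power of $n$, the Hankel transform $\mathcal H_\alpha(g_n)(|\lambda|^{1/n})$ of (\ref{5.2}). Using $M_{\kappa,n}=2(2/n)^{\alpha}\Gamma(\alpha+1)$ to collect constants produces the coefficient $\frac{1}{2n^{\kappa n-\frac n2+1}}$, i.e. the first term.

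The odd contribution is the delicate step. After the same substitution one is left with an integral of $g_o(t)\,j_{\alpha+n}(|\lambda|^{1/n}t)\,t^{2\alpha+n+1}$, where $g_o(t)=f_o((t/n)^n)$: the Bessel index has been raised by $n$, and the weight $t^{2\alpha+n+1}$ is $t^{-n}$ short of the natural Hankel weight $t^{2(\alpha+n)+1}$. The key is to lower the index back to $\alpha$ by means of Sonine's formula (\ref{2.16}), applied with larger index $\alpha+n$ and smaller index $\alpha$, in the form $j_{\alpha+n}(r)=\frac{2(\alpha+1)_n}{n!}\int_0^1 j_\alpha(r\tau)(1-\tau^2)^{n-1}\tau^{2\alpha+1}\,d\tau$. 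Inserting this, setting $\tau=s/t$, and interchanging the order of integration (justified because $f$, hence $g_o$, has compact support, so the double integral is over a bounded region) turns the inner integral into $\int_s^{\infty}g_o(t)(t^2-s^2)^{n-1}t^{-n+1}\,dt=J_n(f_o)(s)$. The remaining outer integral is then $\mathcal H_\alpha(J_n(f_o))(|\lambda|^{1/n})$ up to constants; a short computation using $(\alpha+1)_n\Gamma(\alpha+1)=\Gamma(\alpha+n+1)$ yields the coefficient $\frac{(-i)^n\lambda}{n!\,2^{n}n^{\kappa n-\frac n2+1}}$.

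Summing the two contributions gives the stated formula. The main obstacle is the odd part: one must recognize that Sonine's formula (\ref{2.16}) is exactly what lowers $j_{\alpha+n}$ to $j_\alpha$, justify the Fubini interchange that assembles the operator $J_n$, and keep track of the numerous constants (powers of $n$ and $2$, Gamma factors and Pochhammer symbols) so that they collapse to the clean coefficients above. The strengthened hypothesis $\kappa>\frac{n-1}{n}$, rather than the standing $\kappa>\frac{n-1}{2n}$, is what guarantees absolute convergence of the integrals involved and the validity of the interchange.
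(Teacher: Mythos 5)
Your proposal is correct and takes essentially the same approach as the paper: even/odd splitting, the substitution $x=(t/n)^n$ reducing the even part to $\mathcal{H}_{\kappa n-\frac{n}{2}}(g_n)$, and for the odd part Sonine's formula (\ref{2.16}) to lower the Bessel index from $\kappa n+\frac{n}{2}$ to $\kappa n-\frac{n}{2}$ followed by the Fubini interchange that assembles $J_n(f_o)$. The paper's proof differs only cosmetically, computing $\mathcal{F}_{\kappa,n}(f_e)$ and $\mathcal{F}_{\kappa,n}(f_o)$ directly so that the vanishing of the mixed parity terms is implicit.
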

\begin{proof}
  By making a change of variable and using (\ref{5.2}), we get
\begin{eqnarray}
\mathcal{F}_{\kappa,n}(f_{e})(\lambda)
&=&\left(\frac{2}{n}\right)^{-\left(\kappa n-\frac{n}{2}\right)}\Gamma\left(\kappa n-\frac{n}{2}+1\right)^{-1}\displaystyle\int^{\infty}_{0}f_{e}(x)j_{\kappa n-\frac{n}{2}}(n(|\lambda|x)^{\frac{1}{n}})x^{2\kappa+\frac{2}{n}-2}dx\nonumber\\
&=&\frac{1}{2^{\kappa n-\frac{n}{2}}n^{\kappa n-\frac{n}{2}+1}\Gamma(\kappa n-\frac{n}{2}+1)}\displaystyle\int_{0}^{\infty}f_{e}((\frac{t}{n})^{n}) j_{\kappa n-\frac{n}{2}}(t|\lambda|^{\frac{1}{n}})t^{2\kappa n-n+1}dt\nonumber\\
&=&\frac{1}{2n^{\kappa n-\frac{n}{2}+1}}\mathcal{H}_{\kappa n-\frac{n}{2}}(g_n)(|\lambda|^{\frac{1}{n}}).\label{5.3}
\end{eqnarray}
Proceeding in similar way, we get
$$
\mathcal{F}_{\kappa,n}(f_{o})(\lambda)=\frac{(-i)^n (\frac{n}{2})^{\kappa n+\frac{n}{2}}\lambda}{\Gamma\left(\kappa n+\frac{n}{2}+1\right)}
\displaystyle\int_{0}^{\infty}f_{o}(x)j_{\kappa n+\frac{n}{2}}(n|\lambda x|^{\frac{1}{n}})x^{2\kappa+\frac{2}{n}-1}dx.$$
Sonine's integral formula (\ref{2.16}) and Fubini's theorem lead to
\begin{eqnarray*}
\mathcal{F}_{\kappa,n}(f_{o})(\lambda)&=&C_{\kappa,n}\lambda\;\int_{0}^{\infty}f_o((\frac{t}{n})^{n})
\left[\int_0^1(1-u^2)^{n-1}j_{\kappa n-\frac{n}{2}}(tu|\lambda|^{\frac{1}{n}})u^{2\kappa n-n+1}du\right]t^{2\kappa n-n+1}dt  .\nonumber\\
 &=&C_{\kappa,n}\lambda\;\int_{0}^{\infty}f_o((\frac{t}{n})^{n})\left[\int_0^t(t^2-s^2)^{n-1}j_{\kappa n-\frac{n}{2}}(s|\lambda|^{\frac{1}{n}})s^{2\kappa n-n+1}ds\right]t^{-n+1}dt  .\nonumber\\
 &=&C_{\kappa,n}\lambda\;\int_{0}^{\infty}j_{\kappa n-\frac{n}{2}}(s|\lambda|^{\frac{1}{n}})\left[\int_s^\infty f_o((\frac{t}{n})^{n})(t^2-s^2)^{n-1}t^{-n+1}dt\right]s^{2\kappa n-n+1}ds,
\end{eqnarray*}
where
$ \displaystyle C_{\kappa,n}=\frac{(-i)^n }{n!2^{\kappa n+\frac{n}{2}-1}n^{\kappa n-\frac{n}{2}+1}\Gamma\left(\kappa n-\frac{n}{2}+1\right)}$.\\
Using (\ref{5.2}, it
 derives
\begin{equation}\label{5.4}
 \mathcal{F}_{\kappa,n}(f_{o})(\lambda)= \frac{(-i)^n\lambda}{2^{n}n!n^{\kappa n-\frac{n}{2}+1}}\mathcal{H}_{\kappa n-\frac{n}{2}}(J_n(f_o))(|\lambda|^{\frac{1}{n}}).
\end{equation}
Combining  (\ref{5.3}) and (\ref{5.4}), we obtain the result.
\end{proof}

Ben Said and Al proved in \cite[Theorem 5.1]{BKO} that the generalized Hankel transform $\mathcal{F}_{\kappa,n}$ and its inverse $\mathcal{F}_{\kappa,n}^{-1}$ are topological isomorphisms from $\mathcal{S}(\mathbb{R})$ into itself. They gave a Plancherel's formula
$$\|\mathcal{F}_{\kappa,n}f\|_{\kappa,2}=\|f\|_{\kappa,2}.$$
They also proved that $\mathcal{F}_{\kappa,n}$ can be extended to a topological isomorphism from  $ L^2(\mathbb{R},d\mu_{\kappa,n})$
into itself.

Furthermore, since $B_\lambda^{\kappa,n}$ is bounded by $1$ then we get easily that $\mathcal{F}_{\kappa,n}$ is well defined for $f\in L^1(\mathbb{R},d\mu_{\kappa,n})$ and we have $$\|\mathcal{F}_{\kappa,n}f\|_{\kappa,\infty}\leq\|f\|_{\kappa,1}.
$$
By Riesz-Thorin interpolation theorem we extend the definition of $\mathcal{F}_{\kappa,n}$ for functions $f\in L^p(\mathbb{R},d\mu_{\kappa,n})$, where $1\leq p\leq 2$ and we have the Hausdorff-Young inequality $$\|\mathcal{F}_{\kappa,n}f\|_{\kappa,p'}\leq\|f\|_{\kappa,p}.
$$
\begin{definition}\label{def4.1}
Let $x\in\mathbb{R}$ and  $ f \in \mathcal{C}_b(\mathbb{R}) $. For $n\in \mathbb{N}^\star$ and $\kappa>\frac{n-1}{2n}$, we define the translation operator $ \tau_y^{\kappa,n} $ by
$$
\tau_x^{\kappa,n}f(y)=\displaystyle\int_{\mathbb{R}}f(z)d\nu_{x,y}^{\kappa,n}(z),\quad y\in \mathbb{R},
$$
where $d\nu_{x,y}^{\kappa,n}$ is given by (\ref{main}).
\end{definition}

The following properties hold.
\begin{proposition}
 Let  $n\in \mathbb{N}^\star$, $ \kappa>\frac{n-1}{n} $, $x\in\mathbb{R}$ and  $ f \in \mathcal{C}_b(\mathbb{R}) $. Then
 \begin{itemize}
\item[(i)] $\tau_x^{\kappa,n}f(y)=\tau_y^{\kappa,n}f(x)$.
\item[(ii)] $\tau_0^{\kappa,n}f=f$.
\item[(iii)] $\tau_x^{\kappa,n}\tau_y^{\kappa,n}=\tau_y^{\kappa,n}\tau_x^{\kappa,n}.$\\
If we suppose also that $ f \in \mathcal{C}_c(\mathbb{R}) $, then
\item[(iv)] $ \mathcal{F}_{\kappa,n} \left(\tau_x^{\kappa,n}f\right)(\lambda)=B_\lambda^{\kappa,n}((-1)^nx) \mathcal{F}_{\kappa,n}(f)(\lambda)$.
\item[(v)] $T^{\kappa,n} \tau_x^{\kappa,n}=\tau_x^{\kappa,n}T^{\kappa,n}$,\\
where $$\label{modifiedhankeloperator}
	T^{\kappa,n}f(x)=|x|^{2(1-\frac{1}{n})}\left\lbrace \displaystyle \frac{d^{2}}{dx^{2}}f(x)+\displaystyle \frac{2\kappa}{x}\displaystyle \frac{d}{dx}f(x)-\displaystyle \frac{\kappa}{x^{2}}(1-s)f(x)\right\rbrace,
	$$
	here $sf(x)=f(-x),$ for all $x\in\mathbb{R}.$
\end{itemize}
\end{proposition}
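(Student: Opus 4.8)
The plan is to establish the Fourier–multiplier identity (iv) first and to derive (iii) and (v) from it; the assertions (i)--(ii) are essentially immediate from the definition. For (ii), taking $x=0$ gives $d\nu_{0,y}^{\kappa,n}=d\delta_y$ by (\ref{main}), whence $\tau_0^{\kappa,n}f(y)=\int_\mathbb{R}f(z)\,d\delta_y(z)=f(y)$. For (i) with $xy\neq0$ I would write $\tau_x^{\kappa,n}f(y)=\int_\mathbb{R}f(z)\,\mathcal{K}_{\kappa,n}(x,y,z)\,d\mu_{\kappa,n}(z)$ and invoke the symmetry $\mathcal{K}_{\kappa,n}(x,y,z)=\mathcal{K}_{\kappa,n}(y,x,z)$ from the Properties above, the cases $x=0$ or $y=0$ being checked directly on the Dirac terms of (\ref{main}).

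The heart of the matter is (iv). Inserting the definitions of $\tau_x^{\kappa,n}$ and of $\mathcal{F}_{\kappa,n}$ produces the double integral
$$\mathcal{F}_{\kappa,n}(\tau_x^{\kappa,n}f)(\lambda)=\int_\mathbb{R}\int_\mathbb{R}f(z)\,\mathcal{K}_{\kappa,n}(x,y,z)\,B_\lambda^{\kappa,n}(y)\,d\mu_{\kappa,n}(z)\,d\mu_{\kappa,n}(y).$$
Because $f\in\mathcal{C}_c(\mathbb{R})$, $|B_\lambda^{\kappa,n}|\leq1$, and $d\nu_{x,y}^{\kappa,n}$ is compactly supported with $\|\nu_{x,y}^{\kappa,n}\|\leq4$ by Theorem \ref{th2}, the integral converges absolutely and Fubini's theorem applies. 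I would then use the symmetry $\mathcal{K}_{\kappa,n}(x,y,z)=\mathcal{K}_{\kappa,n}((-1)^nx,z,y)$ to identify the inner $y$-integral with $\int_\mathbb{R}B_\lambda^{\kappa,n}(y)\,d\nu_{(-1)^nx,\,z}^{\kappa,n}(y)$, which by the product formula of Theorem \ref{th1} equals $B_\lambda^{\kappa,n}((-1)^nx)\,B_\lambda^{\kappa,n}(z)$. Factoring $B_\lambda^{\kappa,n}((-1)^nx)$ out of the surviving $z$-integral gives exactly $B_\lambda^{\kappa,n}((-1)^nx)\,\mathcal{F}_{\kappa,n}(f)(\lambda)$, which is (iv).

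With (iv) available, (iii) follows by transforming $\tau_x^{\kappa,n}\tau_y^{\kappa,n}f$ and applying (iv) twice: one obtains $\mathcal{F}_{\kappa,n}(\tau_x^{\kappa,n}\tau_y^{\kappa,n}f)(\lambda)=B_\lambda^{\kappa,n}((-1)^nx)\,B_\lambda^{\kappa,n}((-1)^ny)\,\mathcal{F}_{\kappa,n}(f)(\lambda)$, which is symmetric in $x$ and $y$; injectivity of $\mathcal{F}_{\kappa,n}$ (it is an isomorphism of $\mathcal{S}(\mathbb{R})$ and of $L^2(\mathbb{R},d\mu_{\kappa,n})$) then yields $\tau_x^{\kappa,n}\tau_y^{\kappa,n}f=\tau_y^{\kappa,n}\tau_x^{\kappa,n}f$, extending to $f\in\mathcal{C}_b(\mathbb{R})$ by localisation since the measures are compactly supported. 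For (v) the decisive fact is that $B_\lambda^{\kappa,n}$ is an eigenfunction of $T^{\kappa,n}$: applying $T^{\kappa,n}$ to the even and odd parts of (\ref{1.1}) and using the normalized Bessel equation $z\,j_\alpha''(z)+(2\alpha+1)\,j_\alpha'(z)+z\,j_\alpha(z)=0$ together with $2(\kappa n-\tfrac n2)+1=2\kappa n-n+1$ gives $T^{\kappa,n}B_\lambda^{\kappa,n}=-|\lambda|^{2/n}B_\lambda^{\kappa,n}$. Since $T^{\kappa,n}$ is symmetric for $d\mu_{\kappa,n}$, this yields the intertwining relation $\mathcal{F}_{\kappa,n}(T^{\kappa,n}f)(\lambda)=-|\lambda|^{2/n}\mathcal{F}_{\kappa,n}(f)(\lambda)$. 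Applying $\mathcal{F}_{\kappa,n}$ to $T^{\kappa,n}\tau_x^{\kappa,n}f$ and to $\tau_x^{\kappa,n}T^{\kappa,n}f$ and invoking (iv) shows both equal $-|\lambda|^{2/n}B_\lambda^{\kappa,n}((-1)^nx)\,\mathcal{F}_{\kappa,n}(f)(\lambda)$, and injectivity of $\mathcal{F}_{\kappa,n}$ concludes.

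The main obstacle is rigour rather than the formal identities. In (iv), Fubini must be justified in the presence of the boundary singularity of the Bessel kernel $K_B^{\kappa n-\frac n2}$ (whose density carries the exponent $\kappa n-\frac n2-\frac12$, possibly negative); this is controlled by the uniform bound $\|\nu_{x,y}^{\kappa,n}\|\leq4$ of Theorem \ref{th2}, but the degenerate cases $xy=0$, where $d\nu_{x,y}^{\kappa,n}$ collapses to a Dirac mass by (\ref{main}), have to be argued separately. In (v), the genuinely delicate step is the symmetry of $T^{\kappa,n}$ for $d\mu_{\kappa,n}$: one must verify that the boundary terms of the integration by parts vanish despite the singular weight $|x|^{2\kappa+\frac2n-2}$ at the origin, and correctly handle the reflection part $-\frac{\kappa}{x^2}(1-s)$, which couples the values at $x$ and $-x$. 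This forces a mild regularity requirement on $f$ so that $T^{\kappa,n}f$ is defined and lies in a space on which $\mathcal{F}_{\kappa,n}$ is injective.
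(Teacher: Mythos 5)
Your proposal is correct, and on the substantive items it coincides with the paper's own proof: (i) via the symmetry $\mathcal{K}_{\kappa,n}(x,y,z)=\mathcal{K}_{\kappa,n}(y,x,z)$; (ii) from the degenerate case of (\ref{main}); (iv) by Fubini, the symmetry $\mathcal{K}_{\kappa,n}(x,y,z)=\mathcal{K}_{\kappa,n}((-1)^nx,z,y)$ and the product formula of Theorem \ref{th1}; and (v) from the eigenvalue relation $T^{\kappa,n}B_\lambda^{\kappa,n}=-|\lambda|^{2/n}B_\lambda^{\kappa,n}$, the symmetry of $T^{\kappa,n}$ with respect to $d\mu_{\kappa,n}$, item (iv), and injectivity of $\mathcal{F}_{\kappa,n}$. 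The one genuine divergence is (iii): the paper disposes of it with the single sentence that it ``follows from i)'', whereas you deduce it from (iv) --- both $\tau_x^{\kappa,n}\tau_y^{\kappa,n}f$ and $\tau_y^{\kappa,n}\tau_x^{\kappa,n}f$ have transform $B_\lambda^{\kappa,n}((-1)^nx)\,B_\lambda^{\kappa,n}((-1)^ny)\,\mathcal{F}_{\kappa,n}(f)(\lambda)$ --- followed by injectivity and a localisation to pass from $\mathcal{C}_c(\mathbb{R})$ to $\mathcal{C}_b(\mathbb{R})$. Your route is longer but more defensible: from (i) alone one only gets $\tau_x^{\kappa,n}\tau_y^{\kappa,n}f(w)=\int_{\mathbb{R}}\tau_z^{\kappa,n}f(y)\,d\nu_{x,w}^{\kappa,n}(z)$ and $\tau_y^{\kappa,n}\tau_x^{\kappa,n}f(w)=\int_{\mathbb{R}}\tau_z^{\kappa,n}f(x)\,d\nu_{y,w}^{\kappa,n}(z)$, and the equality of these two expressions is not evident without a multiplier argument of exactly the kind you give, so the paper's one-liner is elliptic at best. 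The only debts in your version are minor: applying (iv) to $\tau_y^{\kappa,n}f$ presumes $\tau_y^{\kappa,n}f\in\mathcal{C}_c(\mathbb{R})$ (compactness of the support comes from Theorem \ref{th2} (i), but continuity deserves a word, e.g.\ by dominated convergence), and injectivity should be invoked on $L^2(\mathbb{R},d\mu_{\kappa,n})$ or on the Schwartz class, both of which the paper provides. Your closing remarks on justifying Fubini near the singular set of $K_B^{\kappa n-\frac{n}{2}}$ and on the boundary terms in the symmetry of $T^{\kappa,n}$ flag precisely the points the paper leaves implicit.
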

\begin{proof}
  i) follows from the property $\mathcal{K}_{\kappa,n}(x,y,z)=\mathcal{K}_{\kappa,n}(y,x,z)$.\\
  ii) is a consequence of the fact that $B_\lambda^{\kappa,n}(0)=1$.\\
  iii) follows from i).\\
  iv) Let $ f \in \mathcal{C}_c(\mathbb{R}) $, then from Definition \ref{def4.1} and Fubini's theorem, we obtain
  \begin{eqnarray*}
    \mathcal{F}_{\kappa,n}(\tau_x^{\kappa,n}f)(\lambda) &=& \int_{\mathbb{R}} \tau_x^{\kappa,n}f(y)B_\lambda^{\kappa,n}(y)d\mu_{\kappa,n}(y)\\
     &=& \int_{\mathbb{R}}\left[\int_{\mathbb{R}}f(z)\mathcal{K}_{\kappa,n}(x,y,z)d\mu_{\kappa,n}(z)\right] B_\lambda^{\kappa,n}(y)d\mu_{\kappa,n}(y)\\
     &=& \int_{\mathbb{R}}f(z)\left[\int_{\mathbb{R}}\mathcal{K}_{\kappa,n}(x,y,z)B_\lambda^{\kappa,n}(y)d\mu_{\kappa,n}(y)\right]d\mu_{\kappa,n}(z).
  \end{eqnarray*}
  The property $\mathcal{K}_{\kappa,n}(x,y,z)=\mathcal{K}_{\kappa,n}((-1)^nx,z,y)$, gives
  $$\mathcal{F}_{\kappa,n}(\tau_x^{\kappa,n}f)(\lambda)=\int_{\mathbb{R}}f(z)
  \left[\int_{\mathbb{R}}\mathcal{K}_{\kappa,n}((-1)^nx,z,y)B_\lambda^{\kappa,n}(y)d\mu_{\kappa,n}(y)\right]d\mu_{\kappa,n}(z).$$
 Using Theorem \ref{th1}, we see that
  $$\mathcal{F}_{\kappa,n}(\tau_x^{\kappa,n}f)(\lambda)=B_\lambda^{\kappa,n}((-1)^nx)\mathcal{F}_{\kappa,n}(f)(\lambda).$$
v) First, we note that
$$T^{\kappa,n}B_\lambda^{\kappa,n}(y)=-|\lambda|^{\frac{2}{n}}B_\lambda^{\kappa,n}(y)$$
and
$$\int_{\mathbb{R}}T^{\kappa,n}f(y)g(y)d\mu_{\kappa,n}(y)=\int_{\mathbb{R}}f(y)T^{\kappa,n}g(y)d\mu_{\kappa,n}(y).$$
Thus, from iv), we see that
$$\mathcal{F}_{\kappa,n}\left(T^{\kappa,n} \tau_x^{\kappa,n}f\right)(\lambda)=\mathcal{F}_{\kappa,n}\left( \tau_x^{\kappa,n}T^{\kappa,n}f\right)(\lambda)=-|\lambda|^{\frac{2}{n}}B_\lambda^{\kappa,n}\mathcal{F}_{\kappa,n}(f)(\lambda).$$
The assertion follows from the injectivity of the generalized Hankel transform.
\end{proof}
\begin{lemma}
 Let  $n\in \mathbb{N}^\star$ and $ \kappa>\frac{n-1}{2n} $,
 $ 1\leq p \leq \infty $, $f\in L^{p}(\mathbb{R},d\mu_{\kappa,n})$ and $x\in\mathbb{R}$. Then
\begin{equation}\label{Trans}
\|\tau_x^{\kappa,n}(f)\|_{\kappa,p}\leq 4\,\|f\|_{\kappa,p}, \ \ x\in\mathbb{R}. \end{equation}
\end{lemma}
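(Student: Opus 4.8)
The plan is to prove the two endpoint estimates $p=1$ and $p=\infty$ and then interpolate, the whole argument resting on two ``dual'' integral bounds for the kernel $\mathcal{K}_{\kappa,n}$, both consequences of Theorem~\ref{th2}(iii). On the one hand, for fixed $x,y$ the total variation bound reads, when $xy\neq0$,
$$\int_{\mathbb{R}}\left|\mathcal{K}_{\kappa,n}(x,y,z)\right|d\mu_{\kappa,n}(z)=\left\|\nu_{x,y}^{\kappa,n}\right\|\leq 4 .$$
On the other hand, to control the integral in the variable $y$ I would invoke the symmetry $\mathcal{K}_{\kappa,n}(x,y,z)=\mathcal{K}_{\kappa,n}((-1)^n x,z,y)$ from the Properties box, which moves $y$ into the third slot, whence
$$\int_{\mathbb{R}}\left|\mathcal{K}_{\kappa,n}(x,y,z)\right|d\mu_{\kappa,n}(y)=\int_{\mathbb{R}}\left|\mathcal{K}_{\kappa,n}((-1)^n x,z,y)\right|d\mu_{\kappa,n}(y)=\left\|\nu_{(-1)^n x,z}^{\kappa,n}\right\|\leq 4 .$$

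With these in hand the endpoints are immediate. For $p=\infty$, the first bound gives the pointwise estimate $|\tau_x^{\kappa,n}f(y)|\leq\int_{\mathbb{R}}|f(z)|\,|\mathcal{K}_{\kappa,n}(x,y,z)|\,d\mu_{\kappa,n}(z)\leq 4\|f\|_{\kappa,\infty}$, hence $\|\tau_x^{\kappa,n}f\|_{\kappa,\infty}\leq 4\|f\|_{\kappa,\infty}$. For $p=1$, I would apply Tonelli's theorem to exchange the order of integration and then use the second bound:
$$\|\tau_x^{\kappa,n}f\|_{\kappa,1}\leq\int_{\mathbb{R}}|f(z)|\left(\int_{\mathbb{R}}\left|\mathcal{K}_{\kappa,n}(x,y,z)\right|d\mu_{\kappa,n}(y)\right)d\mu_{\kappa,n}(z)\leq 4\|f\|_{\kappa,1} .$$

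Finally, since $\tau_x^{\kappa,n}$ is linear and bounded on both $L^1(\mathbb{R},d\mu_{\kappa,n})$ and $L^\infty(\mathbb{R},d\mu_{\kappa,n})$ with norm at most $4$, the Riesz--Thorin interpolation theorem yields $\|\tau_x^{\kappa,n}f\|_{\kappa,p}\leq 4^{1-\theta}\,4^{\theta}\,\|f\|_{\kappa,p}=4\|f\|_{\kappa,p}$ for every $1\leq p\leq\infty$, where $1/p=1-\theta$. The degenerate cases $x=0$ and $y=0$, where $\nu_{x,y}^{\kappa,n}$ is a unit Dirac mass and $\tau_x^{\kappa,n}$ is (essentially) the identity, are trivial. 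The step I expect to require the most care is not computational but structural: the operator $\tau_x^{\kappa,n}$ was defined only on $\mathcal{C}_b(\mathbb{R})$, so to make sense of $\tau_x^{\kappa,n}f$ for an arbitrary $L^p$ function I would first establish the inequality for $f\in\mathcal{C}_c(\mathbb{R})$, where Tonelli and all the interchanges are unproblematic, and then extend $\tau_x^{\kappa,n}$ to all of $L^p(\mathbb{R},d\mu_{\kappa,n})$ by density and continuity; obtaining the $y$-integral bound through the symmetry relation is the one genuinely non-obvious ingredient, everything else being routine.
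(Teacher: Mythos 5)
Your proof is correct, and its endpoint cases coincide exactly with the paper's: for $p=\infty$ both use the total-variation bound $\|\nu_{x,y}^{\kappa,n}\|\leq 4$ of Theorem \ref{th2} iii), and for $p=1$ both use Fubini--Tonelli together with the symmetry $\mathcal{K}_{\kappa,n}(x,y,z)=\mathcal{K}_{\kappa,n}((-1)^nx,z,y)$, which converts the $y$-integral of the kernel into another total-variation bound. Where you diverge is the range $1<p<\infty$: the paper runs a self-contained Schur-test argument, applying H\"older's inequality with the splitting $|f(z)|\,|\mathcal{K}_{\kappa,n}|=\bigl(|f(z)|^{p}|\mathcal{K}_{\kappa,n}|\bigr)^{1/p}|\mathcal{K}_{\kappa,n}|^{1/p'}$ and then Fubini plus the same symmetry, so that the constants combine as $4^{p/p'}\cdot 4=4^{p}$; you instead invoke Riesz--Thorin between the two endpoints. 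Both routes are legitimate and yield the same constant $4$. The paper's computation buys elementarity (only H\"older and Fubini, no interpolation machinery) and sidesteps any question of how the operator acts on $L^{1}+L^{\infty}$; your route buys brevity, at the price of exactly the structural point you flag yourself: since $\tau_x^{\kappa,n}$ was defined only on $\mathcal{C}_b(\mathbb{R})$, one must first prove the bound on a dense class such as $\mathcal{C}_c(\mathbb{R})$ and extend by continuity, or observe that the defining integral converges absolutely $\mu_{\kappa,n}$-a.e.\ for $f\in L^{p}$ precisely because of the Schur-type bounds. The paper itself glosses over this same issue (it states the lemma directly for $f\in L^{p}$), so your explicit handling of it is an improvement rather than a defect.
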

\begin{proof}
 We distinguish the cases:\\
\textbf{Case 1:} $p=\infty$ is obvious.\\
\textbf{Case 2:} If $p=1$, the assertion follows from Fubini-Tonelli's theorem, the property $\mathcal{K}_{\kappa,n}(x,y,z)=\mathcal{K}_{\kappa,n}((-1)^nx,z,y)$ and iii) of Theorem \ref{th2}.\\
\textbf{Case 3:} Let $1<p<+\infty$ and $p'$ denotes the H\"older conjugate exponent of $p$. Then by H\"older inequality, we have
$$|\tau_x^{\kappa,n} f(y)|^p\leq \left(\int_\mathbb{R}|f(z)|^p|\mathcal{K}_{\kappa,n}(x,y,z)|d\mu_{\kappa,n}(z)\right)
\left(\int_\mathbb{R}|\mathcal{K}_{\kappa,n}(x,y,z)|d\mu_{\kappa,n}(z)\right)^\frac{p}{p'}$$
Therefore
$$
\|\tau_{x}^{\kappa,n}f\|^{p}_{\kappa,p}\leq 4^{\frac{p}{p'}}\int_{\mathbb{R}}\int_{\mathbb{R}}|f(z)|^{p}\,|\mathcal{K}_{\kappa,n}(x,y,z)| \,d\mu_{\kappa,n}(z)\,d\mu_{\kappa,n}(y).
$$
Using  again Fubini's theorem and the property $\mathcal{K}_{\kappa,n}(x,y,z)=\mathcal{K}_{\kappa,n}((-1)^nx,z,y)$, we get
\begin{eqnarray*}
\|\tau_{y}^{\kappa,n}f\|^{p}_{\kappa,p}&\leq & 4^{\frac{p}{p'}}\int_{\mathbb{R}}|f(z)|^{p}\,\int_{\mathbb{R}}|\mathcal{K}_{\kappa,n}((-1)^nx,z,y)| \,d\mu_{\kappa,n}(y)\,d\mu_{\kappa,n}(z)\\ &=& 4^{p}\|f\|_{\kappa,p}^p.
\end{eqnarray*}
 Thus, by taking the $\left(1/p\right)^{th}$ power in both sides, we obtain (\ref{Trans}).
\end{proof}

\begin{definition}\label{def4.5}
The convolution product of two suitable functions $ f $ and $ g $ on $ \mathbb{R} $ is defined by
	$$f\star_{\kappa,n} g(x)=\displaystyle\int_{\mathbb{R}}f(y)\,\tau_{x}^{\kappa,n} g((-1)^ny)\;d\mu_{\kappa,n}(y).$$
\end{definition}
It shares the following immediate properties.
\begin{properties}
\begin{itemize}
\item[i)] $f\star_{\kappa,n} g=g\star_{\kappa,n} f$.
    \item[ii)] $\left(f\star_{\kappa,n} g\right)\star_{\kappa,n} h=f\star_{\kappa,n}\left(g \star_{\kappa,n}h\right)$.
    \item[iii)] \textbf{(Young inequality)} For $ p,q,r $ such that $ 1\leq p,q,r\leq \infty $ and $ \frac{1}{p}+\frac{1}{q}-1=\frac{1}{r}, $ and
     for $ f\in L^{p}(\mathbb{R},d\mu_{\kappa,n}) $ and $ g\in L^{q}(\mathbb{R},d\mu_{\kappa,n}), $ the convolution product
     $ f\star_\kappa g $ is a well defined element in $ L^{r}(\mathbb{R},d\mu_{\kappa,n}) $ and
     $$ \|f\star_{\kappa,n} g\|_{\kappa,r}\leq 4\,\|f\|_{\kappa,p}\|g\|_{\kappa,q}. $$
    \end{itemize}
    \end{properties}
    \begin{proof}
 i) By using Fubini's theorem and the property $\mathcal{K}_{\kappa,n}(x,y,z)=\mathcal{K}_{\kappa,n}((-1)^nx,z,y)$, we obtain
 \begin{eqnarray*}
   f\star_{\kappa,n} g(x) &=&\int_{\mathbb{R}}f(y)\left[\int_{\mathbb{R}}g(z)\mathcal{K}_{\kappa,n}(x,(-1)^ny,z)d\mu_{\kappa,n}(z)\right]d\mu_{\kappa,n}(y)\\
    &=& \int_{\mathbb{R}}g(z)\left[\int_{\mathbb{R}}f(y)\mathcal{K}_{\kappa,n}((-1)^nx,z,(-1)^ny)d\mu_{\kappa,n}(y)\right]d\mu_{\kappa,n}(z).
 \end{eqnarray*}
 So using the property $\mathcal{K}_{\kappa,n}((-1)^nx,z,(-1)^ny)=\mathcal{K}_{\kappa,n}(x,(-1)^nz,y)$, we get
 \begin{eqnarray*}
   f\star_{\kappa,n} g(x) &=& \int_{\mathbb{R}}g(z)\left[\int_{\mathbb{R}}f(y)\mathcal{K}_{\kappa,n}(x,(-1)^nz,y)d\mu_{\kappa,n}(y)\right]d\mu_{\kappa,n}(z)\\
  &=& \int_{\mathbb{R}}g(z)\tau_{x}^{\kappa,n}f((-1)^nz)d\mu_{\kappa,n}(z) \\
  &=& g\star_{\kappa,n} f(x).
 \end{eqnarray*}
 ii) is obvious.\\
iii) follows by standard arguments.
\end{proof}

  For every $R > 0$, let us denote by $\mathcal{C}_R(\mathbb{R})$ the space of smooth functions on $\mathbb{R}$
  which are supported in $[-R,R ]$. Then
\begin{proposition}For $f\in \mathcal{D}_{R_1}(\mathbb{R})$ and $g\in \mathcal{D}_{R_2}(\mathbb{R})$, then $f\star_{\kappa,n} g\in \mathcal{D}_{R_1+R_2}(\mathbb{R})$ and  we have
$$\mathcal{F}_{\kappa,n}(f\star_{\kappa,n} g)=\mathcal{F}_{\kappa,n}(f)\mathcal{F}_{\kappa,n}(g).$$
   \end{proposition}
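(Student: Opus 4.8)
The plan is to establish the two assertions of the statement separately: the compact support of $f\star_{\kappa,n}g$, and the exchange formula $\mathcal{F}_{\kappa,n}(f\star_{\kappa,n}g)=\mathcal{F}_{\kappa,n}(f)\mathcal{F}_{\kappa,n}(g)$, and then to deduce smoothness from the latter.

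First I would settle the support. Since $f$ and $g$ have compact supports, the integrand of $f\star_{\kappa,n}g(x)=\int_{\mathbb{R}}f(y)\,\tau_x^{\kappa,n}g((-1)^ny)\,d\mu_{\kappa,n}(y)$ vanishes unless $y\in[-R_1,R_1]$ and $\tau_x^{\kappa,n}g((-1)^ny)\neq0$. By Theorem \ref{th2} i) the measure $\nu_{x,(-1)^ny}^{\kappa,n}$ is supported in $I_{x,(-1)^ny}$, so the non-vanishing of $\tau_x^{\kappa,n}g((-1)^ny)$ forces the existence of a point $z$ with $|z|\le R_2$ lying in $I_{x,(-1)^ny}$, whence $|x|^{\frac{1}{n}}<|y|^{\frac{1}{n}}+|z|^{\frac{1}{n}}\le R_1^{\frac{1}{n}}+R_2^{\frac{1}{n}}$. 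This confines $x$ to a fixed compact set and gives the compact support asserted in the statement.

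For the exchange formula I would start from the definition $\mathcal{F}_{\kappa,n}(f\star_{\kappa,n}g)(\lambda)=\int_{\mathbb{R}}(f\star_{\kappa,n}g)(x)\,B_\lambda^{\kappa,n}(x)\,d\mu_{\kappa,n}(x)$ and insert the definition of the convolution to obtain a double integral in $(x,y)$. The symmetry $\tau_x^{\kappa,n}h(w)=\tau_w^{\kappa,n}h(x)$ yields $\tau_x^{\kappa,n}g((-1)^ny)=\tau_{(-1)^ny}^{\kappa,n}g(x)$, so after applying Fubini's theorem the inner $x$-integral becomes $\mathcal{F}_{\kappa,n}(\tau_{(-1)^ny}^{\kappa,n}g)(\lambda)$. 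The transform rule $\mathcal{F}_{\kappa,n}(\tau_a^{\kappa,n}g)(\lambda)=B_\lambda^{\kappa,n}((-1)^na)\mathcal{F}_{\kappa,n}(g)(\lambda)$ with $a=(-1)^ny$ then gives $B_\lambda^{\kappa,n}(y)\mathcal{F}_{\kappa,n}(g)(\lambda)$, using $((-1)^n)^2=1$. Factoring $\mathcal{F}_{\kappa,n}(g)(\lambda)$ out of the remaining $y$-integral leaves exactly $\int_{\mathbb{R}}f(y)B_\lambda^{\kappa,n}(y)\,d\mu_{\kappa,n}(y)=\mathcal{F}_{\kappa,n}(f)(\lambda)$, which is the claimed identity.

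The main obstacle is the rigorous justification of Fubini's theorem, since $\nu_{x,y}^{\kappa,n}$ is only a signed measure. I would handle it by invoking the uniform bound $\|\nu_{x,y}^{\kappa,n}\|\le4$ of Theorem \ref{th2} iii), the estimate $|B_\lambda^{\kappa,n}|\le1$, and the support analysis above, which together confine the $(x,y)$-integration to a compact set and so yield absolute integrability of the double integrand, legitimizing the interchange. Finally, smoothness of $f\star_{\kappa,n}g$ follows from the exchange formula together with the cited fact that $\mathcal{F}_{\kappa,n}$ is a topological isomorphism of $\mathcal{S}(\mathbb{R})$: since $\mathcal{F}_{\kappa,n}f$ and $\mathcal{F}_{\kappa,n}g$ are Schwartz functions, so is their product, and applying $\mathcal{F}_{\kappa,n}^{-1}$ recovers $f\star_{\kappa,n}g$ as a Schwartz, hence smooth, function; combined with the compact support this places $f\star_{\kappa,n}g$ in $\mathcal{D}_{R_1+R_2}(\mathbb{R})$.
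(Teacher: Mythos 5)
Your proof of the multiplication formula is correct and is in substance the paper's own argument: the paper expands $\mathcal{F}_{\kappa,n}(f\star_{\kappa,n}g)$ into a triple integral and applies the symmetry $\mathcal{K}_{\kappa,n}(x,(-1)^ny,z)=\mathcal{K}_{\kappa,n}(y,z,x)$ together with Theorem \ref{th1}, which is exactly the content of the translation property $\mathcal{F}_{\kappa,n}(\tau_a^{\kappa,n}g)(\lambda)=B_\lambda^{\kappa,n}((-1)^na)\mathcal{F}_{\kappa,n}(g)(\lambda)$ that you invoke instead; the two routes are equivalent. Your justification of Fubini (uniform bound $\|\nu_{x,y}^{\kappa,n}\|\leq 4$, $|B_\lambda^{\kappa,n}|\leq 1$, compactness of supports) is sound and indeed more careful than the paper, which passes over this point, and your Schwartz-isomorphism argument for smoothness addresses a part of the claim that the paper's proof never touches at all (to make it airtight, note that $f\star_{\kappa,n}g$ is bounded with compact support, hence in $L^2(\mathbb{R},d\mu_{\kappa,n})$, so that $\mathcal{F}_{\kappa,n}^{-1}\mathcal{F}_{\kappa,n}(f\star_{\kappa,n}g)=f\star_{\kappa,n}g$ a.e., and that $f\star_{\kappa,n}g$ is continuous, so equality a.e.\ with a smooth function upgrades to equality everywhere).

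The genuine gap is in the support step. Your chain of inequalities yields $|x|^{\frac{1}{n}}<R_1^{\frac{1}{n}}+R_2^{\frac{1}{n}}$, that is $|x|<\bigl(R_1^{\frac{1}{n}}+R_2^{\frac{1}{n}}\bigr)^n$, and for $n\geq 2$ this is strictly weaker than the asserted bound $|x|\leq R_1+R_2$, since $\bigl(R_1^{\frac{1}{n}}+R_2^{\frac{1}{n}}\bigr)^n\geq R_1+R_2$ with strict inequality when $R_1,R_2>0$. So your concluding sentence that this ``gives the compact support asserted in the statement'' is a non sequitur: what you actually prove is $f\star_{\kappa,n}g\in\mathcal{D}_{(R_1^{1/n}+R_2^{1/n})^n}(\mathbb{R})$, not $f\star_{\kappa,n}g\in\mathcal{D}_{R_1+R_2}(\mathbb{R})$. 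Moreover this cannot be repaired within your argument: by Theorem \ref{th2} i) and the explicit form (\ref{calK}), the measure $\nu_{x,y}^{\kappa,n}$ charges essentially all of $I_{x,y}$ (the bracket in (\ref{calK}) vanishes only on a thin set), so for $f$, $g$ concentrated near $R_1$, resp.\ $R_2$, the convolution is genuinely nonzero at points $x$ with $|x|$ close to $\bigl(R_1^{\frac{1}{n}}+R_2^{\frac{1}{n}}\bigr)^n>R_1+R_2$. In other words, the radius $R_1+R_2$ in the proposition is the correct one only for $n=1$; for $n\geq 2$ the natural (and provable) radius is $\bigl(R_1^{\frac{1}{n}}+R_2^{\frac{1}{n}}\bigr)^n$. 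Since the paper's own proof establishes only the multiplication formula and is silent on the support claim, you have in effect uncovered an inaccuracy in the statement itself; you should flag this discrepancy explicitly rather than assert that your bound matches the statement.
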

\begin{proof} Using Fubini's theorem, we have\\

$\displaystyle \mathcal{F}_{\kappa,n}\left(f\star_{\kappa,n} g\right)(\lambda)$
\begin{eqnarray*}
  &=& \int_{\mathbb{R}}B_\lambda^{\kappa,n}(x)f\star_{\kappa,n} g (x)d\mu_{\kappa,n}(x)\\
   &=& \int_{\mathbb{R}}f(z)\left[\int_{\mathbb{R}}g(y)\left(\int_{\mathbb{R}}B_\lambda^{\kappa,n}(x)\mathcal{K}_{\kappa,n}(x,(-1)^ny,z)
   d\mu_{\kappa,n}(x)\right)d\mu_{\kappa,n}(y)\right] d\mu_{\kappa,n}(z).
\end{eqnarray*}
Invoking the property $\mathcal{K}_{\kappa,n}(x,(-1)^ny,z)=\mathcal{K}_{\kappa,n}(y,z,x)$ and Theorem \ref{th1}, lead to
\begin{eqnarray*}
 \mathcal{F}_{\kappa,n}\left(f\star_{\kappa,n} g\right)(\lambda) &=& \left(\int_{\mathbb{R}}f(z)B_\lambda^{\kappa,n}(z)d\mu_{\kappa,n}(z)\right)
 \left(\int_{\mathbb{R}}g(y)B_\lambda^{\kappa,n}(y)d\mu_{\kappa,n}(y)\right) \\
   &=& \mathcal{F}_{\kappa,n}(f)(\lambda)\mathcal{F}_{\kappa,n}(g)(\lambda).
\end{eqnarray*}
Which proves the proposition.

\end{proof}

%


%
%


%
%

\end{document}